\newcommand{\be}{\begin{equation} }
\newcommand{\ee}{\end{equation}}
\newcommand{\bse}{\begin{subequations}}
\newcommand{\ese}{\end{subequations}}
\newcommand{\supp}[1]{\operatorname{supp}{#1}}
\newcommand{\LV}{\left|}
\newcommand{\RV}{\right|}
\newcommand{\LN}{\left\|}
\newcommand{\RN}{\right\|}
\newcommand{\LB}{\left[}
\newcommand{\RB}{\right]}
\newcommand{\LC}{\left(}
\newcommand{\RC}{\right)}
\newcommand{\LA}{\left<}
\newcommand{\RA}{\right>}
\newcommand{\LCB}{\left\{}
\newcommand{\RCB}{\right\}}
\newcommand{\p}{\partial}
\newcommand{\ep}{\varepsilon}
\newcommand{\R}{\mathbb{R}} 
\newcommand{\Z}{\mathbb{Z}} 
\newcommand{\T}{\mathbb{T}}
\newcommand{\ev}{\Lambda}
\newcommand{\amp}{a^{\pm}_{n+1}}
\newcommand{\err}{\mathbf{Err}}
\newcommand{\boldb}{\boldsymbol{b}}
\newcommand{\boldd}{\boldsymbol{d}}
\newcommand{\boldB}{\boldsymbol{B}}
\newcommand{\boldD}{\boldsymbol{D}}
\newcommand{\boldu}{\boldsymbol{u}}
\newcommand{\boldv}{\boldsymbol{v}}
\newcommand{\boldW}{\boldsymbol{W}}
\newcommand{\rev}{\boldsymbol{r}}
\newcommand{\lev}{\boldsymbol{\ell}}
\newcommand{\rem}{\boldsymbol{R}}
\newcommand{\id}{\textrm{I}_2}
\theoremstyle{plain} 
\newtheorem{theorem}{Theorem}[section]
\newtheorem{lemma}{Lemma}[section]
\newtheorem{proposition}{Proposition}[section]
\newtheorem{definition}{Definition}[section]
\theoremstyle{remark} 
\newtheorem{remark}{Remark}[section]
\theoremstyle{definition}
\numberwithin{equation}{section}
\title[$C^0$-nonuniqueness for 1D Conservation laws]{Nonuniqueness for continuous solutions to 1D hyperbolic systems} 
\subjclass[2010]{35L45, 35L65, 76N10}
\keywords{Global weak solutions, non-uniqueness,  Conservation laws, Liu conditions}
\author[R.M. Chen]{Robin Ming Chen}
\address{ Department of Mathematics,
      University of Pittsburgh.}
\email{mingchen@pitt.edu}
\author[A.F. Vasseur]{Alexis F. Vasseur}
\address{Department of Mathematics,
The University of Texas at Austin.}
\email{vasseur@math.utexas.edu}
\author[C. Yu]{Cheng Yu}
\address{Department of Mathematics,
University of Florida.}
\email{chengyu@ufl.edu}
\date{}       
\begin{document}
\begin{abstract}
In this paper, we show that   a geometrical condition on 2 × 2 systems of conservation laws leads to  non-uniqueness  in the class of 1D continuous functions. This demonstrates that the Liu Entropy Condition alone is insufficient to guarantee uniqueness, even within the mono-dimensional setting. We provide examples of systems where this pathology holds, even if they verify stability and uniqueness for small BV solutions. Our proof is based on the convex integration process. Notably, this result represents the first application of convex integration to construct non-unique continuous solutions in one dimension.

\end{abstract}

\thispagestyle{empty}
\maketitle

\setcounter{tocdepth}{1}
\tableofcontents

\section{Introduction}
The aim of this paper is to describe non-uniqueness pathologies for continuous solutions to mono-dimensional conservation laws.
We are considering $2\times2$ hyperbolic systems of conservation laws in one space dimension:
\begin{equation}\label{eq cons}
\p_t \boldu + \p_x f(\boldu) = 0 \qquad \text{for } (t, x) \in \R_+ \times \T,
\end{equation}
where $\T$ is the one-dimensional torus $[0,1]$. The flux function $f:\mathcal{V}\to \R^2$ is a $C^\infty$ function defined on a neighborhood of the origin $\mathcal{V}\subset \R^2$.  For all $\boldu\in \mathcal{V}$, the system is  strictly hyperbolic, when  the Jacobian matrix $Df(\boldu)$ has two distinct real eigenvalues: $\ev^-(\boldu)<\ev^+(\boldu)$.  
\vskip0.3cm

The study of hyperbolic systems of conservation laws has its roots in the work of Riemann in 1860, where he investigated the isentropic gas dynamics. 
For such $2\times 2$ systems, it is possible to construct global uniformly bounded solutions for general initial values, using the compensated compactness method \cite{Tartar}.
However, the problem of uniqueness in this class is completely open. 

\vskip0.3cm
A fundamental difficulty to study the uniqueness of  such systems is the development of  discontinuities  in finite time, known as shocks. This motivates the introduction of additional admissibility conditions. The prevailing view is that for conservation laws in dimension 1, the issue of admissibility for general weak solutions should be resolved through a test applied to every point of the shock set of the solutions (see Dafermos \cite{Dafermos} Chapter 8, page 205).  
\vskip0.3cm
This has been proved to be correct in the small BV framework.  Bressan and De Lellis  proved in \cite{BDle}  the uniqueness of small BV solutions under the only assumption that all points of approximate jump satisfy the Liu admissibility conditions \cite{Liu}. 

However, we show in this article  that  the uniqueness of weak solutions cannot be enforced that way in general. For a family of systems \eqref{eq cons},  we construct non-unique solutions which do not have any discontinuities.

\vskip0.3cm

Since the system is strictly hyperbolic, the spectral gap at 0 is positive:  
$$
\delta_\ev := \ev^+(0)-\ev^-(0)>0.
$$
Moreover, we can choose a base of  right eigenvectors of $Df(\boldu)$,  $\left\{\rev_i(\boldu), i=\pm\right\}$, defined as regular functions of $\boldu$ on $\mathcal{V}$. We have 
$$
A := \left|\mathrm{det} (\rev_-(0), \rev_+(0))\right|>0.
$$
Consider the integral curves of these vector fields passing through the origin:
$$
\frac{d \boldu_i(s)}{ds} = \rev_i(\boldu_i(s)), \qquad \boldu_i(0)=0.
$$
We denote $\kappa_i$, $i = \pm$, the curvature of these curves at 0. Our condition on System \eqref{eq cons} to exhibit non-uniqueness pathologies is the following.
\begin{definition}\label{cond}
For any given $0<\ep<1$, we say that that the system \eqref{eq cons} verifies the condition $\mathcal{C}_\ep$ if $\kappa_->0, \kappa_+>0$, and:
\begin{equation}\label{eq cond} \tag{$\mathcal{C}_\ep$}
\LV (\nabla \ev^-\cdot \rev_-)(0) \RV \leq \ep\frac{\kappa_+ \delta_\ev}{A},\qquad \LV (\nabla \ev^+\cdot \rev_+)(0) \RV \leq \ep\frac{\kappa_- \delta_\ev}{A}.
\end{equation}
\end{definition} 

\subsection{Main result}
Under this condition, we can show the following main theorem.
\begin{theorem}\label{thetheo}
There exists $\ep>0$ such that for any system \eqref{eq cons} verifying the condition $\mathcal{C}_{\ep}$ the following holds true. There exists $\eta >0$ such that for any ball $B\subset B(0,\eta)$, we can find at least  two global weak solutions in $C^0(\R^+\times\T; B)$ of \eqref{eq cons} with the same initial value. 
\end{theorem}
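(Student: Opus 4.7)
The plan is to adapt the convex integration machinery of De Lellis--Székelyhidi to the 1+1-dimensional hyperbolic setting. First I would reformulate \eqref{eq cons} as a differential inclusion: writing $\boldv = f(\boldu)$, the pair $(\boldu,\boldv)$ must satisfy the linear divergence constraint $\p_t\boldu+\p_x\boldv=0$ together with the pointwise nonlinear constraint $\boldv=f(\boldu)$. The idea is to relax the pointwise constraint, work with a larger set of ``subsolutions'', and then construct exact solutions by injecting high-frequency, small-amplitude oscillations that stay inside the relaxed set.

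Next I would analyze the wave cone of the divergence constraint and identify the building blocks. Plane waves $(\boldu,\boldv)(t,x)=(\boldu_0,\boldv_0)+h(\alpha t+\beta x)(\bar\boldu,\bar\boldv)$ satisfy the constraint iff $\alpha\bar\boldu+\beta\bar\boldv=0$, which, linearized at the origin, forces $(\alpha,\beta)\propto(-\ev^\pm(0),1)$ and $\bar\boldu\in\mathrm{span}\{\rev_\pm(0)\}$. To get \emph{nonlinear} oscillations whose averages differ from the seed state, I would use pairs of admissible Liu shocks from each family. The curvature conditions $\kappa_\pm>0$ guarantee that small-amplitude Liu-admissible shocks exist on one side of $0$ in each family and that the Hugoniot chords bend off the eigenlines at second order with a definite sign; consequently, the mean of a periodic shock-pair pattern is a controlled non-zero second-order perturbation of the base state. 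The bound $|(\nabla\ev^\pm\cdot\rev_\pm)(0)|\leq\ep\kappa_\mp\delta_\ev/A$ is precisely the compatibility condition that keeps the shock speeds close to $\ev^\pm(0)$, so that $+$- and $-$-family patterns remain transverse and essentially non-interacting at leading order, while the cross-curvature appearing in \eqref{eq cond} controls the residual interaction.

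Then I would run a Nash--Moser-style iteration. Starting from a smooth subsolution $(\boldu_0,\boldv_0)$ with a small Reynolds stress $\err_0$ measuring the PDE defect, at stage $n$ I would add shock-pair oscillations at frequency $\nu_n\to\infty$ and amplitude $\alpha_n\to 0$ chosen so that $\err_{n+1}$ is strictly smaller than $\err_n$ and $\|\boldu_{n+1}-\boldu_n\|_{C^0}\leq C\alpha_n$. Summability of $\alpha_n$ yields a $C^0$ limit, while $\err_n\to 0$ upgrades this limit to a genuine weak solution; choosing $\eta$ larger than the total oscillation amplitude keeps the solution inside $B(0,\eta)$. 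To produce two solutions with the same initial trace, I would seed two different subsolutions that agree at $t=0$ but differ shortly afterwards (for instance, one corresponding to the constant state and one seeded with a small nonzero oscillation turned on for $t>0$) and run the iteration on both. To handle an arbitrary ball $B\subset B(0,\eta)$ possibly not centered at $0$, I would note that by continuity of the eigen-data $\mathcal{C}_\ep$ persists as $\mathcal{C}_{2\ep}$ at all nearby base points, so the construction can be re-centered at the center of $B$.

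The main obstacle is the scarcity of available directions in 1+1 dimensions. Convex integration normally thrives on the freedom to oscillate in many transverse directions; here only the two characteristic families are available, which forces the $+$- and $-$-oscillations to interact and risks accumulating errors stage after stage. The geometric hypothesis $\mathcal{C}_\ep$ is precisely what buys enough spectral separation (via $\delta_\ev$) and enough second-order genuine nonlinearity (via $\kappa_\pm$) to close the iteration while maintaining $C^0$ regularity rather than merely $L^\infty$. The technical heart of the argument will be the quantitative stress-reduction estimate: showing that one oscillation step reduces $\err_n$ by a definite factor while adding only a summable amount to the $C^0$ norm, all inside a neighborhood of $0$ on which the Liu-admissible shock curves, their speeds, and the curvatures $\kappa_\pm$ are uniformly controlled.
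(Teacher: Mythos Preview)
Your outline has a genuine gap at the building-block level and at the role of the condition $\mathcal{C}_\ep$.

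First, the theorem asks for solutions in $C^0(\R^+\times\T;B)$: this is the whole point, since continuous solutions automatically satisfy every shock-admissibility criterion. Periodic shock-pair patterns are discontinuous, and if you mollify them you lose the exact solution property and reintroduce a stress of the same order as the oscillation amplitude, so no reduction occurs. The paper's building blocks are \emph{smooth} high-frequency waves of the form $a^\pm\cos(\lambda(x-\ev^\pm t))\,\rev_\pm$; the stress is reduced because Taylor expansion of $f$ produces a zeroth harmonic $\tfrac14(a^\pm)^2\boldb_\pm$ with $\boldb_\pm=D^2f(0):(\rev_\pm\otimes\rev_\pm)$, and the error is carried precisely in the basis $(\boldb_-,\boldb_+)$. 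The curvatures $\kappa_\pm>0$ enter only through the relation $\lev_\mp\cdot\boldb_\pm=\pm\kappa_\pm\delta_\ev/A$, which guarantees that $(\boldb_-,\boldb_+)$ is a genuine basis; they have nothing to do with the existence of Liu shocks.

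Second, and more seriously, you are missing the mechanism that makes the iteration close in 1D. After subtracting the zeroth harmonic, the quadratic term still contains oscillating pieces $\tfrac14(a^\pm)^2\boldb_\pm\cos(2\lambda(x-\ev^\pm t))$ that are $O((a^\pm)^2)$ in $L^\infty$, i.e.\ the \emph{same size} as the gain. In multi-D Euler these go into the pressure; here there is no pressure. The paper kills them with a second-level corrector $\boldv^2$ oscillating at frequency $2\lambda$, which requires solving $(\ev^\pm(0)\,\mathrm{I}-Df(0))\boldB_\pm=\boldb_\pm$. This linear system is solvable only if $\lev_\pm\cdot\boldb_\pm=0$, and one computes $\lev_\pm\cdot\boldb_\pm=(\nabla\ev^\pm\cdot\rev_\pm)(0)$. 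The condition $\mathcal{C}_\ep$ says exactly that this obstruction is small relative to the transverse component $\kappa_\mp\delta_\ev/A$, so the unsolvable part of $\boldb_\pm$ contributes only an $O(\ep)$ fraction of the stress at the next step (Lemma~\ref{lemm structure}). Your reading of $\mathcal{C}_\ep$ as ``shock speeds stay close to $\ev^\pm(0)$'' or as a control on cross-family interaction is not what is being used; without the corrector/solvability interpretation the iteration does not close.

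Finally, non-uniqueness in the paper is obtained not by starting from two different subsolutions but from a single constant subsolution and inserting a time-dependent phase $P(t)$ (equal to $0$ or to $\pi$ for $t\ge -1/2$) in the oscillations; the two resulting sequences agree for $t\le -2$ but differ at $(0,0)$.
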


To be more precise, we define a weak solution in the following sense.
\begin{definition}
\label{definition of weak solutions}
A bounded measurable function $\boldu(t,x)$ is called a weak solution of \eqref{eq cons} with the bounded and measurable intial data $\boldu_0$, provided that the following equality holds for all $\varphi\in C^1_0(\R\times\T)$:
\begin{equation}
\label{weak formulation}
\int_0^t\int_{\mathbb{T}}(\boldu \varphi_t+f(\boldu)\varphi_x)\,dx\,dt+\int_{\mathbb{T}}\boldu_0\varphi(x,0)\,dx=0.
\end{equation}
\end{definition}
For the sake of clarity,  we focus in this article on the construction of only two different solutions. However, our proof can be easily extended to obtain infinitely many such solutions. 
\vskip0.3cm
\begin{remark}Note that the result is not true in the scalar case. Indeed, any continuous solution $u\in C^0(\R^+\times \T; \R)$ of a scalar conservation laws of the form \eqref{eq cons} is unique. This is a consequence of the uniqueness in $C^1$  of solutions to the associated Hamilton-Jacobi equation \cite{CL1}. Consider $v(t,x)=\int_0^xu(t,y)\,dy-\int_0^tf(u(s,0))\,ds$. Then $v\in C^1(\R^+\times\R;\R)$ is the unique solution to the Hamilton-Jacobi equation 
$$
\partial_t v+f(\partial_x v)=0.
$$
\end{remark}

\begin{remark}
Our result is then optimal in terms of space dimension ($d=1$) and size of the systems ($2\times 2$). 
Note that a  similar result was proved by Giri and Kwon \cite{Giri-Kwon} in dimension bigger than 2 for the isentropic Euler system. Theorem \ref{thetheo} however is the first 1D result of non-uniqueness for continuous solutions to conservation laws. 

\end{remark}

The condition of positive curvatures excludes the cases of \emph{linear fluxes} or \emph{trivial systems} formed of two independent scalar conservation laws, since in these cases, the integral curves would be lines. This prevents also the case of Rich systems which share a lot of properties with the scalar case. 
\vskip0.3cm
Theorem \ref{thetheo} offers a strikingly different picture with what is known in the small BV theory. Extensive efforts have been devoted to this case, employing various methods such as the Glimm scheme, front tracking scheme, and vanishing viscosity method (see for instance \cite{Dafermos, Br} for a survey). These approaches have been instrumental in the thorough investigation of the well-posedness of small BV solutions to systems. The uniqueness of solutions in this framework has been  developed by  Bressan and al in the late 90' \cite{BLe, BL} (See also Liu and Yang \cite{LiuYang}).  Technical conditions have been removed recently in \cite{BG,BDle}.
Note that all these works proved the uniqueness and $L^1$ stability of small BV solutions among solutions from the same class of regularity. 
\vskip0.3cm
In the last decade, the method of $a$-contraction with shifts   in \cite{CKV,GKV} extended those  results to weak/BV uniqueness and stability results (in the spirit of weak/strong principles of Dafermos and DiPerna \cite{Dafermos1, DiPerna}). Considering cases with a strictly convex entropy functional, it shows that small BV solutions are unique among a large class of entropic weak solutions (bounded and verifying the so-called very strong trace property). 
\vskip0.3cm
Bianchini and Bressan showed in \cite{BB}, that in the case of artificial viscosity,  the unique BV solution  can be obtained and selected via the inviscid limit. 
In the isentropic case, the result was extended to inviscid limit of the  Navier-Stokes equation in \cite{CKV2} (see also \cite{KV} and \cite{V}).   This result, based on the $a$-contraction theory, extends also the uniqueness and stability of small BV solutions among the large class of any inviscid limits of the Navier-Stokes equation.   
\vskip0.3cm
It would be interesting to see if either the use of a convex entropy, or the principle of inviscid limit could restore uniqueness in our setting.

\vskip0.3cm
Our method is  based on convex integration first introduced by De Lellis and Szekelyhidi \cite{de2009euler, de2010euler}  to show  non-uniqueness results for  the incompressible Euler. 
For compressible fluid, convex integration was used for the first time by Chiodaroli, De Lellis, and Kreml \cite{CDK} to demonstrate the non-uniqueness of weak solutions to the isentropic compressible Euler system with Riemann initial data in 2D. Recently, Giri and Kwon constructed non-unique continuous entropic solutions also in 2D in \cite{Giri-Kwon}. Their primary method involves the convex integration technique developed for the incompressible Euler equations. This approach, however, cannot be extended directly to hyperbolic systems of conservation laws in 1D, since 1D incompressible flows are trivial.
\vskip0.3cm
In a different approach, Krupa and Szekelyhidi  investigated the  non-uniqueness  for 1D (possibly) discontinuous entropic solutions  in \cite{krupa2022nonexistence}. they showed that the classical T4 convex integration method cannot be applied in this context (see also Lorent and Peng \cite{Lorent-Peng}, and Johansson and Tione \cite{Johansson_2023} for the $p$-system). Finally, Krupa showed in \cite{K} that without entropy condition, it is possible to construct solutions of the $p$-system that are so oscillating that they do not even verify the Rankine-Hugoniot condition.  
\vskip0.3cm
In order to construct non-unique continuous solutions, we are developing new techniques that amplify oscillations in line with the strict  hyperbolic feature. We will explain our main idea in Section 2.

\vskip0.3cm
\subsection{Comment on Condition \ref{eq cond}}
Along the integral curve $\boldu_i$, the quantity
$$
(\nabla \ev^i\cdot \rev_i)(\boldu (s))=\frac{d\ev^i(\boldu(s))}{ds}
$$
is the rate of change of the $i$-th eigenvalue along the integral curve. Therefore, the condition \ref{eq cond} of Definition \ref{cond} illustrates that for each characteristic field, the rate of change of the associated characteristic speed at 0 in the direction of the corresponding eigenvectors is very small compared to the ratio between the product of the curvature of the other integral curve and the spectral gap at 0, and the ``area distortion'' induced by two normalized eigenvectors.
\vskip0.3cm
In the theory of conservation laws, an $i$-th  characteristic field is called \emph{linearly degenerate} if $\nabla\ev^i\cdot \rev_i$ is equal to 0 in $\mathcal{V}$, and it is called \emph{genuinely nonlinear} if   $\nabla\ev^i\cdot \rev_i\neq 0$ in $\mathcal{V}$. 
If both characteristic fields are genuinely nonlinear we say the system is a genuinely nonlinear system.
Note that the condition \ref{eq cond} is always verified for linearly degenerate fields with non-zero curvatures. 
\vskip0.3cm

\subsection{Example}To illustrate our Theorem \ref{thetheo}, we  consider the following  system:
\begin{equation}\label{example}
\left\{
\begin{split}
& \partial_t u + \p_x \LC \frac{u v}{2}+v \RC = 0, \\
& \partial_t  v + \p_x \LC u-\frac{v^2}{2} \RC = 0.
\end{split}\right.
\end{equation}
We show the following theorem.
\begin{theorem}\label{theo-ex}
There exists   $\mathcal{V}\subset \R^2$,  such that both characteristic fields of \eqref{example} are genuinely nonlinear in $\mathcal{V}$. Moreover, for any ball $B\subset \mathcal{V}$ there exist at least  two weak  solutions of \eqref{example} in $C^0(\R^+\times\T;B)$ with the same initial value. 
\end{theorem}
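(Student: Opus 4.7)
The plan is to reduce Theorem \ref{theo-ex} to a direct verification of the hypotheses of Theorem \ref{thetheo} at a suitably chosen base point, combined with a translation argument.

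First, I would diagonalize the Jacobian
\[
Df(u,v)=\begin{pmatrix} v/2 & u/2+1 \\ 1 & -v \end{pmatrix}
\]
to obtain eigenvalues $\ev^\pm(u,v) = -v/4 \pm D(u,v)/2$ with $D = \sqrt{9v^2/4 + 2u + 4}$ and eigenvectors $\rev^\pm = (3v/4 \pm D/2,\,1)$. A direct calculation then yields the key identity
\[
\nabla\ev^\pm \cdot \rev^\pm = \pm \frac{3v}{2D}.
\]
This vanishes exactly on the line $\{v = 0\}$, so both characteristic fields are genuinely nonlinear on any open set $\mathcal{V}$ avoiding that line. In parallel, I would compute $A = |\det(\rev^-,\rev^+)| = D$, so $\delta_\ev / A = 1$ identically, and verify from $D\rev^\pm \cdot \rev^\pm = (1 \pm 3v/(2D),\,0)$ that the curvatures $\kappa_\pm$ of the integral curves are strictly positive and uniformly bounded below near $(0,0)$.

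Next I would fix the $\ep$ provided by Theorem \ref{thetheo} and choose $v_* > 0$ small, with $|v_*|$ less than $(2/3)\,\ep \cdot \inf \kappa_\mp$ over a neighborhood of $(0, v_*)$. Setting $\mathcal{V} = B((0, v_*), \rho)$ with $0 < \rho < v_*$, both genuine nonlinearity and the uniform bound $|\nabla\ev^\pm \cdot \rev^\pm| = 3|v|/(2D) \le \ep\,\kappa_\mp$ hold throughout $\mathcal{V}$. For any ball $B \subset \mathcal{V}$ centered at $\boldu_0$, I would translate the flux by $\tilde f(\boldu) = f(\boldu + \boldu_0) - f(\boldu_0)$; the translated system $\p_t \boldu + \p_x \tilde f(\boldu) = 0$ is strictly hyperbolic near $0$, both fields remain genuinely nonlinear nearby, and condition $\mathcal{C}_\ep$ at its new origin reduces to the inequality verified above at $\boldu_0$. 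Applying Theorem \ref{thetheo} to $\tilde f$ yields, after translating back, two distinct continuous weak solutions of \eqref{example} in $C^0(\R^+ \times \T; B)$ with the same initial data.

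The main (mild) obstacle is ensuring uniformity across the translations: the parameter $\eta$ in Theorem \ref{thetheo} is system-dependent, and we need $B \subset B(\boldu_0, \eta)$ for every $\boldu_0 \in \mathcal{V}$. This is handled by further shrinking $\rho$, using continuity of the relevant quantities on the compact $\overline{\mathcal{V}}$. A separate conceptual point worth flagging is that $\nabla\ev^\pm \cdot \rev^\pm$ vanishes at the original origin $(0,0)$, so Theorem \ref{thetheo} cannot be invoked directly there; the translation converts this vanishing at $0$, combined with the smallness of $v_*$, into precisely the smallness that $\mathcal{C}_\ep$ demands, trading the special point $0$ for a neighborhood of $(0, v_*)$ where both genuine nonlinearity and (after shift) $\mathcal{C}_\ep$ hold.
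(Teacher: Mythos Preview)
Your proposal rests on a misconception that sends you down an unnecessarily complicated path. You write that ``$\nabla\ev^\pm \cdot \rev^\pm$ vanishes at the original origin $(0,0)$, so Theorem \ref{thetheo} cannot be invoked directly there.'' This is backwards: the vanishing of $\nabla\ev^\pm \cdot \rev^\pm$ at $0$ is precisely what makes condition \eqref{eq cond} hold \emph{trivially} at $0$ for every $\ep \in (0,1)$, provided only that $\kappa_\pm > 0$. The condition requires the left-hand sides $|\nabla\ev^\pm \cdot \rev_\pm(0)|$ to be small; when they equal zero, the inequality is automatic.

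The paper's proof follows this simpler route: it verifies $\kappa_\pm > 0$ at the origin by computing $\boldb_\pm = D^2f(0):(\rev_\pm(0)\otimes\rev_\pm(0)) = \tfrac12(\pm 1,-1)^T \ne 0$ and invoking the identity \eqref{eq b} (with $\nabla\ev^\pm \cdot \rev_\pm = 0$ there), applies Theorem \ref{thetheo} \emph{once} at the origin to obtain a single $\eta > 0$, and then takes $\mathcal{V}$ to be any ball inside $B(0,\eta)$ that avoids the line $\{v=0\}$. Genuine nonlinearity throughout $\mathcal{V}$ follows from your own formula $\nabla\ev^\pm \cdot \rev^\pm = \pm 3v/(2D)$. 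No translation of the flux, no family of shifted systems, and no uniformity argument are needed.

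Beyond the detour, your approach carries a genuine (if mild) gap. Theorem \ref{thetheo} asserts only the \emph{existence} of an $\eta > 0$ for each system satisfying $\mathcal{C}_\ep$; it gives no continuous or uniform dependence of $\eta$ on the flux. Your appeal to ``continuity of the relevant quantities on the compact $\overline{\mathcal{V}}$'' to make $\rho \le \eta(\boldu_0)$ for all $\boldu_0 \in \mathcal{V}$ would require reopening the proof of Theorem \ref{thetheo} to extract a uniform $\eta$ over the family of translated fluxes $\tilde f_{\boldu_0}$. The paper's single application at the origin sidesteps this entirely.
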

Genuinely nonlinear fields are the natural extensions to systems of convex flux for scalar conservation laws. This example shows that non-uniqueness for continuous weak solutions can hold even under these conditions. 
\begin{remark}
In the 70', Glimm and Lax  constructed in \cite{GL} solutions  to general $2\times2$ genuinely nonlinear systems, for any small enough initial data in $L^\infty$ (see also Bianchini, Colombo, Monti \cite{BCM}).  Our result shows that some of these solutions are not unique in the class of solutions verifying the Liu condition.
\end{remark}
\vskip0.3cm

\vskip0.3cm
The rest of the paper is structured as follows. We give the main idea of the proof in Section \ref{sec:ideas}. We describe the notion of subsolutions and the approximation scheme in Section \ref{sec subsoln approx}. The strength of the high frequency waves is introduced in Section \ref{sec amp}. We describe the induction argument and prove the convergence in Section \ref{sec induct}. The non-uniqueness through the dephasing process is done in Section \ref{sec nonunique}, then our main Theorem \ref{thetheo} follows.  Finally, System \eqref{example} is 
studied in Section \ref{sec example}.

\section{Ideas of the proof}\label{sec:ideas}

The goal of this paper is to show that under the condition $\mathcal{C}_{\ep}$ of Definition \ref{cond} for $\ep>0$ small enough, for any ball $B$ in a small neighborhood of 0, System \eqref{eq cons} admits multiple continuous solutions $\boldu \in C^0(\R_+ \times \T; B)$ sharing the same initial data.
\vskip0.1cm
Since we assume that the system \eqref{eq cons} is regular and strictly hyperbolic, we have:
\begin{enumerate}[label=\rm(H\arabic*)]
\item \label{cond f 1} $f : \R^2 \cap B_r(0) \to \R^2$ and $f \in C^\infty(B_r(0))$ for some $r > 0$.
\item \label{cond f 2} $Df(0)$ has two distinct real eigenvalues $\ev^\pm(0)$, with the associated (normalized) right eigenvectors $\rev_{\pm}(0)$. We denote
\begin{equation}\label{eigenvalue inner product}
p_0 := \LA \rev_+ (0), \rev_- (0) \RA.
\end{equation}
Strict hyperbolicity implies that $0 \le p_0 < 1$.
\end{enumerate}
\vskip0.3cm
We denote $\lev_\pm(0)$  the left eigenvectors of $Df(0)$ corresponding to $\ev^\pm(0)$ respectively, and  
\begin{equation}\label{eq def b and d}
\begin{split}
 \boldb_+ & := D^2f(0) : \LC \rev_+(0) \otimes \rev_+(0) \RC, \quad \boldb_- := D^2f(0) : \LC \rev_-(0) \otimes \rev_-(0) \RC, \\
 \boldsymbol{d} & := D^2f(0) : \LC \rev_+(0) \otimes \rev_-(0) \RC.
\end{split}
\end{equation}
\vskip0.3cm
Following the general methodology of convex integration, we will construct a family of approximations $\{ (\boldu_n, E_{n,-}, E_{n,+}) \}$ with $E_{n, \pm} > 0$ such that
$$
\p_t \boldu_n + \p_x \LB f(\boldu_n) + E_{n,-} \boldb_- + E_{n,+} \boldb_+ \RB = 0.
$$
This is an approximation to the system \eqref{eq cons} where the error term (equivalent to the Reynolds tensor in the classical convex integration of the incompressible Euler equations) is projected on the basis  $(\boldb_-,\boldb_+)$ as defined in \eqref{eq def b and d}. Note that Lemma \ref{lemm structure} below will actually prove that under the Hypothesis of Definition \ref{cond}, this forms a basis of $\R^2$. 
The rough idea is then to construct recursively the family $\boldu_n$ by adding highly oscillating functions $\boldv_{n+1}$
$$
\boldu_{n+1} = \boldu_n + \boldv_{n+1},
$$
such that $\boldu_n$ converges in $C^0$, and that the error terms $E_{n,-}$ and $E_{n,+}$ converge in a controlled way to 0. Adding  phase shifts in the oscillations of   the functions $\boldv_{n+1}$ ensures that we can obtain different solutions at the limit. The correction term $\boldv_{n+1}$ has actually  two parts: $\boldv_{n+1} = \boldv^1_{n+1} + \boldv^2_{n+1}$. Let us first focus on the first level of correction $ \boldv^1_{n+1} $. 
\vskip0.3cm
A careful reader may notice that we are lightly oversimplifying the argument here, since the oscillating function is actually added to a slightly regularized $\boldu_n$ (see \eqref{eq est mean osc}). This slight regularization is for technical reasons which are classical in the convex integration method. It allows a sharp control on higher derivatives of $\boldu_n$ which is needed during the expansion.
\vskip0.3cm
The computation involves an expansion of the flux function near 0. The correction of the error term is done at the order 2. Because of that it is very important to carefully tune the oscillations in the eigen-modes of $Df(\boldu_n)$. (In the parlance of convex integration for the incompressible Euler, it is to avoid as much as possible transport and Nash errors).  
The rough idea is to construct a first level of correction $\boldv^1_{n+1}$ as 
\begin{eqnarray*}
\boldv^1_{n+1}(t,x)&=& \p_x \Big\{ a^+_{n+1}(t,x) \rev_+(\boldu_n(t,x))\sin(\lambda_{n+1}(x-\Lambda ^+(\boldu_n(t,x))t))\\
&& \qquad + a^-_{n+1}(t,x)\rev_-(\boldu_n(t,x)) \sin(\lambda_{n+1}(x-\Lambda ^-(\boldu_n(t,x))t)) \Big\},
\end{eqnarray*}
where the wave amplitude $a^\pm_{n+1}(t,x)$, the right eigenvectors $\rev_\pm(\boldu_n(t,x))$ of $Df(\boldu_n)$, and the eigenvalues $\Lambda ^\pm (\boldu_n(t,x))$ can be seen as low frequency with respect to the new high frequency $\lambda_{n+1}$. (Actually, the oscillations of $\Lambda ^\pm (\boldu_n(t,x))$ are too fast, necessitating the localization of phase in Subsection \ref{subsec loc}). Then, taking into account only the high order oscillations and the first term of correction, we have roughly for $\tilde\boldu_{n+1}=\boldu_n+\boldv^1_{n+1}$, up to small errors denoted by $\mathbf{Err}$, that
$$
\partial_t \tilde\boldu_{n+1}+\partial_x\left[ f(\boldu_n)+Df(\boldu_n)\boldv^1_{n+1} +  E_{n,-} \boldb_- + E_{n,+} \boldb_++\mathbf{Err}\right]=0.
$$
And so, up to possible additional errors from truncating the expansion of the flux function $f$ at the second order (we still denote $\mathbf{Err}$ the cumulative error):
\begin{equation}\label{eq approx}
\partial_t \tilde\boldu_{n+1}+\partial_x\left[ f(\tilde\boldu_{n+1})-\frac{D^2f(\boldu_n)}{2}:(\boldv^1_{n+1}\otimes \boldv^1_{n+1})+  E_{n,-} \boldb_- + E_{n,+} \boldb_++\mathbf{Err}\right]=0.
\end{equation}
Using that 
\[
\sin^2(y)=1/2-\cos(2y)/2$$ and $$2\sin(y)\sin(z)= \cos(y-z)-\cos(y+z),
\]
we have
\be\label{eq still osc}
\begin{split}
\frac{D^2f(\boldu_n)}{2}:(\boldv^1_{n+1}\otimes \boldv^1_{n+1})&=\frac{1}{4}\left(|a^-_{n+1}|^2 \boldb_-+ |a^+_{n+1}|^2 \boldb_+\right)
\\&+(\text{still oscillating terms}).
\end{split}
\ee
Choosing carefully $a^+_{n+1}$ and $a^-_{n+1}$, we can deplete geometrically the error terms $E_{n,-}$ and $E_{n,+}$ when $n$ converges to infinity. 
Note that the other error terms $\mathbf{Err}$  always can be projected onto the basis $( \boldb_-, \boldb_+)$. And because the system is strictly hyperbolic close to 0, we always have two directions of oscillations. However, the remaining oscillating terms are not necessarily small in $L^\infty$ and they pose serious challenges. In the classical theory of convex integration for the incompressible Euler equations, these terms can be absorbed into the pressure. But we don't have this luxury here. We need a principle to filter these oscillations out of the system. This is where the hypothesis based on Definition \ref{cond} comes into play.
\vskip0.3cm 
For the sake of a simple presentation of the idea, let us for now drop the cross terms involved in the still oscillating terms in \eqref{eq still osc} (they are easier to treat anyway). The two other terms are exactly:
\begin{equation}\label{eq still osc main}
\begin{split}
\mu^-_n(t,x) \boldb_-+\mu^+_n(t,x) \boldb_+
& = \frac{1}{4}\left[ |a^-_{n+1}|^2 \boldb_-\cos(2\lambda_{n+1}(x-\Lambda^-t))\right]\\
& \qquad +\frac{1}{4}\left[ |a^+_{n+1}|^2 \boldb_+\cos(2\lambda_{n+1}(x-\Lambda^+t))\right].
\end{split}
\ee
To filter out these oscillations,  we consider the second family of correctors $\boldv^2_{n+1}$ of the form
\begin{eqnarray*}
\boldv^2_{n+1}=
&& \frac{1}{4}\left[ |a^-_{n+1}|^2 \boldB_-\cos(2\lambda_{n+1}(x-\Lambda^-t))\right] \\
&&\qquad +\frac{1}{4}\left[ |a^+_{n+1}|^2 \boldB_+\cos(2\lambda_{n+1}(x-\Lambda^+t))\right]
\end{eqnarray*}
for some suitably chosen $\boldB_\pm$. Then, taking into account only the high order oscillations again:
\be\label{eq second kill osc}
\begin{split}
 &\partial_t \boldv^2_{n+1}+\partial_x(Df(\boldu_n) \boldv^2_{n+1}) \\
 & \ + \partial_x \Big[ \mu^-_n (\Lambda^-\id-Df(\boldu_n))\boldB_-) +\mu^+_n ( \Lambda^+\id-Df(\boldu_n))\boldB_+)+\mathbf{Err} \Big] = 0,
\end{split}
\ee
where $\id$ is the $2\times 2$ identity matrix.

\vskip0.3cm

 Note that these terms in $\boldv^2_{n+1}$ are small compared to $\boldv^1_{n+1}$ (because they are quadratic in amplitude). Therefore the second order error in the expansion of $f$ for this term in \eqref{eq approx} is very small. For the same reason, and because we are constructing very small solutions $\boldu_n \approx 0$, the corrector $\boldv^2_{n+1}$ can help cancel the terms in \eqref{eq still osc main} if we can find vectors $\boldB_-, \boldB_+$ such that 
\begin{eqnarray*}
&& (\ev^-(0) \id-Df(0))\boldB_-=\boldb_-,\\
&& (\ev^+(0) \id-Df(0))\boldB_+=\boldb_+.
\end{eqnarray*}
Multiplying on the left the first equation by the vector $\lev_-(0)$, and the second equation by the vector $\lev_+(0)$, this leads to the condition
$$
\lev_\pm(0) \cdot \boldb_\pm=0.
$$
Note that this ``twisted" condition is equivalent to saying that $\boldb_\pm\in \mathrm{span}\{\rev_\mp(0)\}$.
We do not need such a strong condition, but we need that the component of $\boldb_\pm$ along $\rev_\pm(0)$, $(\boldb_\pm\cdot \lev_\pm(0) )  \rev_\pm(0)$, contributes only a small error when reprojected on the basis $(\boldb_-, \boldb_+)$. This property follows from the assumptions of Definition \ref{cond} for $\ep$ small enough:
\begin{lemma}\label{lemm structure}
For  $0<\ep<1$, if the system \eqref{eq cons} verifies the condition \ref{eq cond} of Definition \ref{cond} then:
$$
\det (\boldb_-, \boldb_+) \ne 0,
$$
and  
$$
(\boldb_\pm\cdot \lev_\pm(0) )  \rev_\pm(0) =\alpha^\pm \boldb_\pm+\beta^\pm \boldb_\mp,
$$
with 
\begin{equation}\label{cond on f3}
|\alpha^\pm|+|\beta^\pm|\leq \frac{\ep}{1-\ep}.
\end{equation}
\end{lemma}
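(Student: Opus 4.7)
The plan is to derive an explicit decomposition of $\boldb_\pm$ in the basis $(\rev_-(0),\rev_+(0))$ by differentiating the eigenvalue relation along the corresponding integral curve, and then to invert the resulting $2\times 2$ change-of-basis matrix. Differentiating the identity $Df(\boldu)\rev_i(\boldu)=\ev^i(\boldu)\rev_i(\boldu)$ in the direction $\rev_i(\boldu)$ and evaluating at the origin yields
\begin{equation*}
\boldb_i = \sigma_i\,\rev_i(0) + \bigl(\ev^i(0)\,\id-Df(0)\bigr)\bigl(D\rev_i(0)\,\rev_i(0)\bigr),\qquad \sigma_i := (\nabla\ev^i\cdot\rev_i)(0).
\end{equation*}
Since $\rev_i$ has unit length along the integral curve, the vector $D\rev_i(0)\,\rev_i(0)$ is orthogonal to $\rev_i(0)$ and has length equal to the curvature $\kappa_i>0$. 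Decomposing this curvature vector in the basis $(\rev_-(0),\rev_+(0))$, the orthogonality condition combined with $A=|\det(\rev_-(0),\rev_+(0))|=\sqrt{1-p_0^2}$ forces the coordinate along the other right eigenvector to be $\pm\kappa_i/A$. The operator $(\ev^i(0)\id-Df(0))$ then annihilates the $\rev_i(0)$ component and multiplies the transverse one by $\pm\delta_\ev$, producing
\begin{equation*}
\boldb_+ = \sigma_+\rev_+(0) + \eta_+\rev_-(0),\quad \boldb_- = \sigma_-\rev_-(0) + \eta_-\rev_+(0),\quad |\eta_\pm|=\kappa_\pm\delta_\ev/A.
\end{equation*}

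Next I would invoke the biorthogonality $\lev_i(0)\cdot\rev_j(0)=\delta_{ij}$, which immediately gives $\boldb_\pm\cdot\lev_\pm(0)=\sigma_\pm$, so the quantity of interest is simply $\sigma_\pm\,\rev_\pm(0)$. Reading the decomposition above as
\begin{equation*}
\begin{pmatrix}\boldb_+\\ \boldb_-\end{pmatrix}=M\begin{pmatrix}\rev_-(0)\\ \rev_+(0)\end{pmatrix},\qquad M=\begin{pmatrix}\eta_+ & \sigma_+\\ \sigma_- & \eta_-\end{pmatrix},
\end{equation*}
one computes $\det M = \eta_+\eta_--\sigma_+\sigma_-$. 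The condition $\mathcal{C}_\ep$ reads exactly $|\sigma_\pm|\le \ep|\eta_\mp|$, whence
\begin{equation*}
|\det M|\ge |\eta_+\eta_-|-|\sigma_+\sigma_-|\ge (1-\ep^2)|\eta_+\eta_-|>0,
\end{equation*}
which simultaneously establishes the linear independence of $\boldb_-$ and $\boldb_+$ and makes $M$ invertible.

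Finally, I would read off $\rev_\pm(0)$ from $M^{-1}$ as explicit combinations of $\boldb_+,\boldb_-$ and multiply by $\sigma_\pm$, obtaining
\begin{equation*}
\sigma_+\rev_+(0)=\tfrac{\sigma_+}{\det M}\bigl(-\sigma_-\boldb_++\eta_+\boldb_-\bigr),\qquad \sigma_-\rev_-(0)=\tfrac{\sigma_-}{\det M}\bigl(\eta_-\boldb_+-\sigma_+\boldb_-\bigr).
\end{equation*}
This identifies $\alpha^\pm,\beta^\pm$ explicitly, and applying the triangle inequality together with $|\sigma_\pm|\le\ep|\eta_\mp|$ yields
\begin{equation*}
|\alpha^\pm|+|\beta^\pm|\le \frac{|\sigma_\pm|\bigl(|\sigma_\mp|+|\eta_\pm|\bigr)}{|\det M|}\le \frac{\ep|\eta_\mp|\,(1+\ep)|\eta_\pm|}{(1-\ep^2)|\eta_+\eta_-|}=\frac{\ep}{1-\ep},
\end{equation*}
which is exactly \eqref{cond on f3}. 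The only mildly delicate point is the geometric identification of $D\rev_i(0)\rev_i(0)$ with the curvature vector and the evaluation of its coordinates in the non-orthonormal basis $(\rev_-(0),\rev_+(0))$; once that is in hand, the rest is routine $2\times 2$ linear algebra.
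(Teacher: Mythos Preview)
Your proof is correct and follows essentially the same approach as the paper: derive the decomposition $\boldb_\pm = \sigma_\pm\rev_\pm(0) + \eta_\pm\rev_\mp(0)$ with $|\eta_\pm| = \kappa_\pm\delta_\ev/A$, then invert the $2\times 2$ change-of-basis matrix and estimate using $|\sigma_\pm|\le\ep|\eta_\mp|$. The only cosmetic difference is that the paper obtains these coordinates by differentiating the scalar identities $\lev_j(\boldu)\cdot[Df(\boldu)-\ev^i(\boldu)\id]\,\rev_i(\boldu)=0$ separately for $j=i$ and $j\ne i$, whereas you differentiate the eigenvector equation directly and identify the transverse coefficient geometrically via the curvature vector $D\rev_i(0)\rev_i(0)$; both routes produce the same formula and the same final bound $\ep/(1-\ep)$.
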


\begin{proof} 
We split the proof into two steps. For simplicity of the presentation, we wite $\rev_\pm = \rev_\pm(0)$ and $\lev_\pm = \lev_\pm(0)$.
\vskip0.1cm \noindent{Step 1. Projection of the vector $\boldb_\pm$ onto the basis $(\rev_+,\rev_-)$.} First, we have
\[
\boldb_\pm=(\boldsymbol{\ell}_\pm\cdot \boldb_\pm) \rev_\pm+(\boldsymbol{\ell}_\mp\cdot \boldb_\pm) \rev_\mp,
\]
where the left eigenvectors are chosen in a way such that $\lev_\pm \cdot \rev_\pm = 1$, and so 
\[
|\boldsymbol{\ell}_\pm|=\frac{1}{|\det(\rev_\pm,\rev_\mp)|} = \frac1A.
\]
We have to compute $(\boldsymbol{\ell}_\pm\cdot \boldb_\pm)$ and $(\boldsymbol{\ell}_\mp\cdot \boldb_\pm)$.
For $\boldu$ in a neighborhood of 0, we have
$$
\boldsymbol{\ell}_\pm(\boldu)[Df(\boldu)- \Lambda^\pm(\boldu) \id ]\rev_\pm(\boldu)=0.
$$
Differentiating in the direction $\rev_\pm(\boldu)$, and evaluating the result at $\boldu=0$, we find
$$
\boldsymbol{\ell}_\pm(0)\cdot [D^2f(0) - \nabla \Lambda^\pm(0) \id ]:(\rev_\pm(0)\otimes\rev_\pm(0))=0,
$$
and so 
$$
\boldsymbol{\ell}_\pm\cdot\boldb_\pm=\rev_\pm \cdot\nabla \Lambda^\pm(0).
$$
In the same way, we have
$$
\boldsymbol{\ell}_\mp(\boldu)[Df(\boldu)-\id\Lambda^\pm(\boldu)]\rev_\pm(\boldu)=0.
$$
Differentiating again in the direction  $\rev_\pm(\boldu)$, and evaluating the result at $\boldu=0$, we find
$$
(\rev_\pm\cdot\nabla)\rev_\pm\cdot\boldsymbol{\ell}_\mp (\Lambda^\mp-\Lambda^\pm)+\boldsymbol{\ell}_\mp \cdot D^2f(0):(\rev_\pm \otimes\rev_\pm )=0.
$$
Since 
$$
(\rev_\pm\cdot\nabla)\rev_\pm\cdot\boldsymbol{\ell}_\mp=|\boldsymbol{\ell}_\mp| \kappa_\pm= \frac{\kappa_\pm}{A},
$$
we find 
$$
\boldsymbol{\ell}_\mp\cdot \boldb_\pm = \pm \frac{\kappa_\pm \delta _\Lambda}{A}.
$$
Therefore
\begin{equation}\label{eq b}
\boldb_\pm=(\rev_\pm\cdot\nabla \Lambda^\pm) \rev_\pm \pm (\kappa_\pm \delta_\Lambda/A) \rev_\mp.
\end{equation}
\vskip0.1cm \noindent{Step 2. Writing $(\rev^\pm,\rev^\mp)$ in the base of $(\boldb^\pm,\boldb^\mp)$.} Inverting the matrix, we find:
\begin{eqnarray*}
&& |\alpha^\pm| = \left|\frac{(\rev_\pm\cdot\nabla \Lambda^\pm)(\rev_\mp\cdot\nabla \Lambda^\mp)}{(\kappa_\pm \delta_\Lambda/A)(\kappa_\mp \delta _\Lambda/A) + (\rev_\pm\cdot\nabla \Lambda^\pm)(\rev_\mp\cdot\nabla \Lambda^\mp)}\right|,\\
&&|\beta^\pm| = \left|\frac{(\rev_\pm\cdot\nabla \Lambda^\pm) (\kappa_\mp \delta _\Lambda/A)}{(\kappa_\pm \delta _\Lambda/A)(\kappa_\mp \delta _\Lambda/A) + (\rev_\pm\cdot\nabla \Lambda^\pm)(\rev_\mp\cdot\nabla \Lambda^\mp)}\right|.
\end{eqnarray*}
Using the estimates of Definition \ref{cond}, we find
$$
 |\alpha^\pm|\leq \frac{\ep^2}{1 - \ep^2}, \qquad
|\beta^\pm|\leq \frac{\ep}{1-\ep^2},
$$
which leads to \eqref{cond on f3}.
\end{proof}

Now we can apply the above lemma to the second-order corrector $\boldv^2_{n+1}$ to help filter out the oscillation in \eqref{eq still osc main}. Note that now
\be\label{eq decomp b}
\boldb_\pm = \LC \alpha^\pm \boldb_\pm + \beta^\pm \boldb_\mp \RC \pm \frac{\kappa_\pm \delta_\ev}{A} \rev_\mp,
\ee
and we can find vectors $\boldB_\pm$ such that
\be\label{eq exist B}
\LB \ev^+ \id - Df(0) \RB \boldB_\pm = \pm \frac{\kappa_\pm \delta_\ev}{A} \rev_\mp =: \tilde{\boldb}_\pm.
\ee
Therefore from \eqref{eq still osc main} and \eqref{eq second kill osc} we find that after applying the corrector $\boldv^2_{n+1}$, the remaining oscillation in \eqref{eq still osc main} becomes
\[
\LC \mu^-_n \alpha^+ + \mu^+_n \beta^- \RC \boldb_+ + \LC \mu^-_n \beta^- + \mu^+_n \alpha^- \RC \boldb_-,
\]
where from \eqref{cond on f3} we have 
\[
\LV \mu^-_n \alpha^+ + \mu^+_n \beta^- \RV, \ \LV \mu^-_n \beta^- + \mu^+_n \alpha^- \RV \le \frac{\ep}{4(1 - \ep)} \LC |a^-_{n+1}|^2 + |a^+_{n+1}|^2 \RC.
\]
Hence this remaining oscillation is much smaller compared with the ``error-depleting'' term \eqref{eq still osc}.

\section{Subsolutions and approximation scheme}\label{sec subsoln approx}

\subsection{Subsolutions}
We start with a relaxed version of \eqref{eq cons} and consider the following notion of {\it subsolutions}.
\begin{definition}\label{def subsoln}
A subsolution to \eqref{eq cons} is a triple $(\boldu_s, E_{s,-}, E_{s,+})$ with $\boldu_s \in C^\infty((0, T) \times \T; \R^2)$ and $E_{s, \pm} \in C^\infty((0, T) \times \T)$ such that $E_{s,\pm} \ge \gamma$ for some $\gamma > 0$ and
\begin{equation}\label{eq subsoln}
\p_t \boldu_s + \p_x \LB f(\boldu_s) + E_{s,-} \boldb_- + E_{s,+} \boldb_+ \RB = 0.
\end{equation}
\end{definition}

An easy choice for the subsolution is
\begin{equation}\label{const subsoln}
\boldu_s = 0, \quad E_{s,\pm} = \gamma.
\end{equation}

Starting from the above subsolution $(0, \gamma, \gamma)$, we aim to construct a family of approximation $\{ (\boldu_n, E_{n,-}, E_{n,+}) \}$ with $E_{n, \pm} > 0$ such that
\begin{equation}\label{eq iter n}
\p_t \boldu_n + \p_x \LB f(\boldu_n) + E_{n,-} \boldb_- + E_{n,+} \boldb_+ \RB = 0,
\end{equation}
together with further properties that we will discuss in the following. Suppose that $Df(\boldu_n)$ admits two distinct real eigenvalues. 

\subsection{Regularization}
Let $\eta_{\delta}(t,x)$ be a smooth function supported within a space-time cube of sidelength $\delta > 0$. Given a function $f \in L^\infty(\R \times \T)$ we define the regularization of $f$ to be
\begin{equation*}
f^\delta := \eta_\delta \ast f,
\end{equation*}
where the convolution is taken in both space and time. 

Regularizing \eqref{eq iter n} with some scale $\delta_n > 0$ leads to
\begin{equation}\label{eq iter n reg}
\p_t \boldu_n^{\delta_n} + \p_x \LB f(\boldu_n^{\delta_n}) + E_{n,-}^{\delta_n} \boldb_- + E_{n,+}^{\delta_n} \boldb_+ \RB + \p_x \LB {f^{\delta_n}(\boldu_n)} - f(\boldu_n^{\delta_n}) \RB = 0.
\end{equation}

Commutator estimates imply that
\begin{align*}
& \LN {f^{\delta_n}(\boldu_n)} - f(\boldu_n) \RN_{L^\infty} \lesssim \delta_n \| \nabla \boldu_n \|_{L^\infty}, \\
& \LN f(\boldu_n^{\delta_n}) - f(\boldu_n) \RN_{L^\infty} \lesssim \delta_n \| \nabla \boldu_n \|_{L^\infty},
\end{align*}
which yields
\begin{equation}\label{eq comm}
\LN {f^{\delta_n}(\boldu_n)} - f(\boldu_n^{\delta_n}) \RN_{L^\infty} \lesssim {\delta_n} \| \nabla \boldu_n \|_{L^\infty},
\end{equation}
where the constants in these estimates depend only on $f$. For simplicity, we will use $\|\cdot\|$ to indicate the $L^\infty$ norm from now.

\medskip

\noindent{\bf Notation.} For $\delta_n$ sufficiently small we know that $Df(\boldu_n^{\delta_n})$ also has two distinct real eigenvalues. To fix notation, we will denote $\ev_n^\pm := \ev_n^\pm(\boldu_n^{\delta_n})$ to be the two distinct real eigenvalues of $Df(\boldu_n^{\delta_n})$, with the right eigenvectors $\rev_n^\pm := \rev^\pm(\boldu_n^{\delta_n})$. The right eigenvectors of $Df(0)$ are $\rev^\pm$.

\subsection{Localization}\label{subsec loc}
Let $\{ \lambda_n \}^\infty_{n=1}$ be an increasing (super-geometric) sequence with $\lambda_n \to \infty$. For each $n$, let $\{\varphi_{n,j}\}_{j\in \Z}$ be such that $\{\varphi^2_{n,j}\}$ forms a smooth partition of unity for $\R$, that is
\be\label{eq cutoff}
\supp \varphi_{n,j} \subset \LB \frac{j-2/3}{\lambda_n}, \frac{j+2/3}{\lambda_n} \RB, \qquad \sum_{j\in \Z} \varphi^2_{n,j}(\cdot) = 1.
\ee
On the $j$th interval $[(j-2/3)/\lambda_n, (j+2/3)/\lambda_n]$ we define an average of the eigenvalues to be
\begin{equation}\label{eq ave lambda}
\ev_{n,j}^\pm := \frac{j}{\lambda_n}.
\end{equation}
Thus it follows that for $i = \pm$,
\begin{equation}\label{eq est mean osc}
\LV \varphi_{n,j}(\ev_n^i) (\ev_n^i - \ev_{n,j}^i) \RV \le \frac{1}{\lambda_n}.
\end{equation}

If $\boldu_n$ is bounded, say, 
\begin{equation*}
\| \boldu_n \|_{L^\infty} \le M, 
\end{equation*}
then we further have the following derivative estimates for $\varphi_{n,j}$
\begin{equation}\label{est varphi}
\LV \nabla \varphi_{n,j} \LC \ev_n^\pm \RC \RV \lesssim_{M} \lambda_n |\nabla \boldu_n^{\delta_n}|, \qquad \LV \nabla^2 \varphi_{n,j} \LC \ev_n^\pm \RC \RV \lesssim_{M} \lambda_n^2 |\nabla \boldu_n^{\delta_n}|^2 + \lambda_n |\nabla^2 \boldu_n^{\delta_n}|,
\end{equation}
where the constants in the above estimates depend on $M$.

\subsection{Iteration}
Choosing an increasing (super-geometric) sequence $\lambda_n \to \infty$ as above.
Given the $n$-th iteration $\{ (\boldu_n, E_{n,-}, E_{n,+}) \}$, we then choose an appropriate smoothing scale ${\delta_n}$ and amplitude function $\amp(t,x)$, to be determined later, and define
\begin{equation}\label{eq u_n+1}
\boldu_{n+1} = \boldu_n^{\delta_n} + \boldv_{n+1}
\end{equation}
where $\boldv_{n+1}$ has two parts: $\boldv_{n+1} = \boldv^1_{n+1} + \boldv^2_{n+1}$, where $\boldv_{n+1}^1$ is supposed to correct the iteration error at the first order, and $\boldv_{n+1}^2$ is designed to give the second order correction. 

We further make the following decomposition
\[
\boldv^1_{n+1} = \boldv^{1,+}_{n+1} + \boldv^{1,-}_{n+1}, \qquad \boldv^2_{n+1} = \boldv^{2,+,+}_{n+1} + \boldv^{2,+,-}_{n+1} + \boldv^{2,-,+}_{n+1} + \boldv^{2,-,-}_{n+1},
\]
where
\begin{equation}\label{eq v^1}
\boldv^{1,\pm}_{n+1} := \p_x \LCB \sum_j \varphi_{n,j} \LC \ev_n^\pm \RC \frac{\amp}{\lambda_{n+1}} \sin \LB \lambda_{n+1} (x - \ev_{n,j}^\pm t) + P(t) \RB \rev_n^\pm \RCB
\end{equation}
for some phase function $P \in C^2(\R)$ with bounded derivatives, and $\boldv^{2,\pm,\pm}_{n+1}$ will be given later in Section \ref{sec 2nd correct}. Note that the above is a finite sum since $\ev_n^\pm$ is bounded.

From the definition of $\boldv_{n+1}^{1,\pm}$ we have
\begin{align*}
\boldv_{n+1}^{1,\pm} = &  \sum_j \varphi_{n,j}\LC \ev_n^\pm \RC \amp \cos\LB \lambda_{n+1} (x - \ev_{n,j}^\pm t) + P(t) \RB \rev_n^\pm \\
& + \sum_j \p_x \LB \varphi_{n,j} \LC \ev_n^\pm \RC \frac{\amp}{\lambda_{n+1}} \rev_n^\pm \RB \sin \LB \lambda_{n+1} (x - \ev_{n,j}^\pm t) + P(t) \RB.
\end{align*}
Together with \eqref{est varphi} this implies that
\begin{equation}\label{eq est v^1}
\begin{aligned}
& \LV \boldv_{n+1}^{1,\pm} \RV & \lesssim_{M} & \ \LV a_{n+1} \RV \LC 1 + \frac{\lambda_n |\nabla \boldu_n^{\delta_n}|}{\lambda_{n+1}} \RC + \frac{\LV \nabla a_{n+1} \RV}{\lambda_{n+1}}, \\
& \LV \nabla \boldv_{n+1}^{1,\pm} \RV & \lesssim_{M} & \ \LV a_{n+1} \RV \LC \lambda_{n+1} + \lambda_n |\nabla \boldu_n^{\delta_n}| + \frac{\lambda_n^2 |\nabla \boldu_n^{\delta_n}|^2 + \lambda_n {|\nabla^2 \boldu_n^{\delta_n}|}}{\lambda_{n+1}} \RC + \\
& & & \ \LV \nabla a_{n+1} \RV \LC 1 + \frac{\lambda_n |\nabla \boldu_n^{\delta_n}|}{\lambda_{n+1}} \RC + \frac{{\LV \nabla^2 a_{n+1} \RV}}{\lambda_{n+1}},
\end{aligned}
\end{equation}
where 
\[
|a_{n+1}| := \max\{ |a_{n+1}^-|,\  |a_{n+1}^+|\}, \qquad |\nabla a_{n+1}| := \max\{ |\nabla a_{n+1}^-|,\  |\nabla a_{n+1}^+|\}.
\]

We would like to find the equation that $\boldu_{n+1}$ satisfies. Note that
\begin{align}
& \p_t \boldu_{n+1} + \p_x f(\boldu_{n+1}) = \p_t(\boldu_n^{\delta_n} + \boldv_{n+1}) + \p_x f(\boldu_n^{\delta_n} + \boldv_{n+1}) \nonumber \\
= & \ \p_t \boldu_n^{\delta_n} + \p_x f(\boldu_n^{\delta_n}) + \p_t \boldv_{n+1} + \p_x \LB \ev_n \boldv_{n+1} + \LC Df(\boldu_n^{\delta_n}) - \ev_n \id \RC \boldv_{n+1} + \frac{D^2f(\boldu_n^{\delta_n})}{2} : \LC \boldv_{n+1} \otimes \boldv_{n+1} \RC \RB \nonumber \\
& \ + \p_x \underbrace{\LB f(\boldu_n^{\delta_n} + \boldv_{n+1}) - f(\boldu_n^{\delta_n}) - Df(\boldu_n^{\delta_n}) \boldv_{n+1} - \frac{D^2f(\boldu_n^{\delta_n})}{2} : \LC \boldv_{n+1} \otimes \boldv_{n+1} \RC \RB}_{=: \err_1}, \label{full sys}
\end{align}
where we have from the Taylor's theorem that
\begin{equation}\label{eq err1}
\err_1 = O (|\boldv_{n+1}|^3).
\end{equation}

Further plugging in equation \eqref{eq iter n reg} for $\boldu_n^{\delta_n}$ and the decomposition of $\boldv_{n+1}$ we obtain that
\begin{equation}\label{full sys iterate}
\begin{split}
& \ \p_t \boldu_{n+1} + \p_x f(\boldu_{n+1}) \\
= & \ - \p_x\LB \LC E_{n,-}^{\delta_n} \boldb_- + E_{n,+}^{\delta_n} \boldb_+ \RC + \LC f^{\delta_n}(\boldu_n) - f(\boldu_n^{\delta_n}) \RC \RB \\
& \ + \p_t \boldv_{n+1}^1 + \p_x \LB  Df(\boldu_n^{\delta_n}) \boldv_{n+1}^1 + \frac{D^2f(\boldu_n^{\delta_n})}{2} : \LC \boldv_{n+1}^1 \otimes \boldv_{n+1}^1 \RC \RB \\
& \ + \p_t \boldv_{n+1}^2 + \p_x \LB  Df(\boldu_n^{\delta_n}) \boldv_{n+1}^2 \RB + \p_x (\err_1 + \err_2),
\end{split}
\end{equation}
where 
{\small
\begin{equation}\label{eq err2}
\err_2 := \frac{D^2f(\boldu_n^{\delta_n})}{2} : \LB \LC \boldv_{n+1} \otimes \boldv_{n+1} \RC - \LC \boldv_{n+1}^1 \otimes \boldv_{n+1}^1 \RC \RB = O \LC |\boldv_{n+1}^1| |\boldv_{n+1}^2| + |\boldv_{n+1}^2|^2 \RC.
\end{equation}}

\subsection{First order correction}
The first part of the $(n+1)$-st oscillation, $\boldv^{1,\pm}_{n+1}$, is supposed to decrease the error at the linear level. We will leave most of the technical estimates in Appendix \ref{sec appendix correct}. One can check that
\[
\p_t \boldv^{1,\pm}_{n+1} + \p_x \LC \ev_n^\pm \boldv^{1,\pm}_{n+1} \RC = \p_x \rem^{(1),\pm}_{n+1}
\]
where $\rem^{(1),\pm}_{n+1}$ is given in \eqref{eq remainder 1}, with the estimates in \eqref{eq est R_1}.

Moreover, let
\begin{equation*}
\rem^{(2),\pm}_{n+1} := \LB Df(\boldu_n^{\delta_n}) - \ev_n^\pm \id \RB \boldv_{n+1}^{1, \pm}.
\end{equation*}
Then using the fact that $\LB Df(\boldu_n^{\delta_n}) - \ev_n^\pm \id \RB \rev_n^\pm = 0$, an improved estimate can be obtained as in \eqref{eq est R_2}.

Now for the quadratic terms we have for $k, l \in \{+, -\}$,
\begin{equation*}
\begin{split}
& D^2f(\boldu_n^{\delta_n}) : \LC \boldv_{n+1}^{1, k} \otimes \boldv_{n+1}^{1, l} \RC \\
& \ = \sum_{i,j} \varphi_{n,i} \LC \ev_n^k \RC \varphi_{n,j} \LC \ev_n^l \RC \cos \LB \lambda_{n+1} (x - \ev_{n,i}^k t) + P(t) \RB \cos \LB \lambda_{n+1} (x - \ev_{n,j}^l t) + P(t) \RB \cdot \\
& \qquad \quad  \LC a_{n+1}^k \cdot a_{n+1}^l \RC \LB D^2f(0) : \LC \rev^k \otimes \bold\rev^l \RC \RB  + \rem^{(3),k,l}_{n+1},
\end{split}
\end{equation*}
where the remainder $\rem^{(3),k,l}_{n+1}$ and the corresponding estimates are given in \eqref{eq def R_3} and \eqref{eq est R_3} respectively.

Putting together we find that $\boldv_{n+1}^{1}$ satisfies
{\small
\begin{align}\label{1st correct}
&  \p_t \boldv_{n+1}^{1} + \p_x \LB Df(\boldu_n^{\delta_n}) \boldv_{n+1}^{1} \RB + \frac{D^2f(\boldu_n^{\delta_n})}{2} : \LC \boldv_{n+1}^{1} \otimes \boldv_{n+1}^{1} \RC \nonumber \\
= & \sum_{k = \pm} \Big\{ \p_t \boldv_{n+1}^{1, k} + \p_x \LB Df(\boldu_n^{\delta_n}) \boldv_{n+1}^{1, k} \RB \Big\} + \frac{D^2f(\boldu_n^{\delta_n})}{2} : \LC \sum_{k=\pm} \boldv_{n+1}^{1, k} \otimes \sum_{l = \pm} \boldv_{n+1}^{1, l} \RC \\
= & \ \frac12 \p_x \sum_{k,l = \pm} \LCB \sum_{i,j} \varphi_{n,i} ( \ev_n^k ) \varphi_{n,j} ( \ev_n^l ) \cos \LB \lambda_{n+1} (x - \ev_{n,i}^k t) + P(t) \RB \cos \LB \lambda_{n+1} (x - \ev_{n,j}^l t) + P(t) \RB \cdot \right. \nonumber\\
& \ \  \quad  \LC a_{n+1}^k \cdot a_{n+1}^l \RC \LB D^2f(0) : \LC \rev^k \otimes \rev^l \RC \RB \Big\}  + \p_x \LB \sum_{k = \pm} \LC \rem^{(1),k}_{n+1} + \rem^{(2),k}_{n+1} \RC + \sum_{k,l = \pm} \rem^{(3),k,l}_{n+1} \RB \nonumber\\
=: & \ \frac12 \p_x \sum_{k,l = \pm} \LCB \LC a_{n+1}^k \cdot a_{n+1}^l \RC Q^{k,l}_{n+1} \LB D^2f(0) : \LC \rev^k \otimes \rev^l \RC \RB   \RCB + \p_x \LB \sum_{k = \pm} \LC \rem^{(1),k}_{n+1} + \rem^{(2),k}_{n+1} \RC + \sum_{k,l = \pm} \rem^{(3),k,l}_{n+1} \RB. \nonumber
\end{align}}
This corresponds to the third line of \eqref{full sys iterate}.

\subsection{Second order correction}\label{sec 2nd correct}
From above we find that with a controllable error, the first part of the oscillation $\boldv_{n+1}^{1, \pm}$ ``corrects'' the equation up to a quadratic error 
\begin{align*}
& \p_x \LCB \frac12 \sum_{k,l = \pm} \LC a_{n+1}^k \cdot a_{n+1}^l \RC Q^{k,l}_{n+1} \LB D^2f(0) : \LC \rev^k \otimes \rev^l \RC \RB   \RCB \\
= & \ \frac12 \p_x \LCB  \sum_{k = \pm} \LC a_{n+1}^k \RC^2 Q^{k,k}_{n+1}  \boldb_k + \Big[ \LC a_{n+1}^+ \cdot a_{n+1}^- \RC Q^{+,-}_{n+1} + \LC a_{n+1}^- \cdot a_{n+1}^+ \RC  Q^{-,+}_{n+1}  \Big] \boldsymbol{d} \RCB,
\end{align*}
where $\boldb_\pm$ and $\boldsymbol{d}$ are defined in \eqref{eq def b and d}. Note that this error term involves the interaction between two cosine waves. By symmetry we know that $Q^{+,-}_{n+1} = Q^{-,+}_{n+1}$.

Explicit calculation leads to
\begin{align*}
2Q^{k,k}_{n+1} = & \ \sum_j \LB \varphi_{n,j} \LC \ev_n^k \RC \RB^2 \LB 1 + \cos \LC 2\lambda_{n+1} (x - \ev_{n,j}^k t) + 2P(t) \RC \RB + \\
& \ \sum_{|i-j|=1} \varphi_{n,i} \LC \ev_n^k \RC \varphi_{n,j} \LC \ev_n^k \RC \cos \LC \lambda_{n+1} (2x - (\ev_{n,i}^k + \ev_{n,j}^k) t) + 2P(t) \RC + \\
& \ \sum_{|i-j| = 1}  \varphi_{n,i} \LC \ev_n^k \RC \varphi_{n,j} \LC \ev_n^k \RC \cos \LC \lambda_{n+1} (\ev_{n,j}^k - \ev_{n,i}^k) t \RC, \quad \text{for } k \in \{+, -\}.
\end{align*}
From \eqref{eq ave lambda} we know that $\ev_{n,j}^+ = \ev_{n,j}^-$, and hence
\begin{align*}
2Q^{+,-}_{n+1} = & \ \sum_j  \varphi_{n,j} \LC \ev_n^+ \RC \varphi_{n,j} \LC \ev_n^- \RC  \LB 1 + \cos \LC 2\lambda_{n+1} (x - \ev_{n,j}^+ t) + 2P(t) \RC \RB + \\
& \ \sum_{i\ne j} \varphi_{n,i} \LC \ev_n^+ \RC \varphi_{n,j} \LC \ev_n^- \RC \cos \LC \lambda_{n+1} (2x - (\ev_{n,i}^+ + \ev_{n,j}^-) t) + 2P(t) \RC + \\
& \ \sum_{i \ne j}  \varphi_{n,i} \LC \ev_n^+ \RC \varphi_{n,j} \LC \ev_n^- \RC \cos \LC \lambda_{n+1} (\ev_{n,j}^+ - \ev_{n,i}^-) t \RC.
\end{align*}

Consider the first term on the right-hand side of the above. Roughly, we expect $\boldu_n$ to be very small, and hence $\ev_n^\pm$ is close to $\ev^\pm(0)$, say
\[
\LV \ev^\pm_n - \ev^\pm(0) \RV \le \frac{\LV \ev^{\pm}(0) \RV}{2}.
\]
This implies that $\ev_n^\pm$ remain separate due to strict hyperbolicity at 0. In particular by taking
\[
\lambda_0 > \frac{4}{|\ev^+(0) - \ev^-(0)|}
\]
we have that $\varphi_{n,j} \LC \ev_n^+ \RC \varphi_{n,j} \LC \ev_n^- \RC = 0$ for all $j$. 

The goal is to correct the above quadratic error using the second part of the oscillation: $\boldv_{n+1}^{2, \pm,\pm}$. To balance those oscillating terms it is natural to consider $\boldv_{n+1}^{2}$ of the form
\[
\boldv_{n+1}^{2} = \boldv^{2,+,+}_{n+1} + \boldv^{2,+,-}_{n+1} + \boldv^{2,-,+}_{n+1} + \boldv^{2,-,-}_{n+1},
\]
where
\begin{subequations}\label{eq v^2}
{\small
\begin{align}\label{eq v^2 ++}
\boldv_{n+1}^{2, k,k} = & \ - \p_x \LC \sum_j \frac{\LC a_{n+1}^k \RC^2}{8\lambda_{n+1}}  \LB \varphi_{n,j} \LC \ev_n^k \RC \RB^2 \sin \LB 2\lambda_{n+1} (x - \ev_{n,j}^k t) + 2P(t) \RB \boldB_k \RC - \\
& \ \p_x \LC \sum_{|i-j| = 1} \frac{\LC a_{n+1}^k \RC^2}{8\lambda_{n+1}}  \varphi_{n,i} \LC \ev_n^k \RC \varphi_{n,j} \LC \ev_n^k \RC \sin \LB \lambda_{n+1} (2x - (\ev_{n,i}^k + \ev_{n,j}^k) t) + 2P(t) \RB \boldB_k \RC - \nonumber \\
& \ \p_x \LC \sum_{|i-j| = 1} \frac{\LC a_{n+1}^k \RC^2}{4 \lambda_{n+1} (\ev_{n,j}^k - \ev_{n,i}^k)}   \varphi_{n,i} \LC \ev_n^k \RC \varphi_{n,j} \LC \ev_n^k \RC \sin \LC \lambda_{n+1}  (\ev_{n,j}^k - \ev_{n,i}^k) t \RC \boldb_k \RC, \nonumber
\end{align}}
for $k \in \{+, -\}$, and $\boldB_\pm$ are defined in \eqref{eq exist B},
and
{\small
\begin{align}\label{eq v^2 +-}
\boldv_{n+1}^{2, +,-} = & \ \boldv_{n+1}^{2, -,+} \\
= & \ - \p_x \LC \sum_{i \ne j} \frac{ a_{n+1}^+ \cdot a_{n+1}^- }{8\lambda_{n+1}}  \varphi_{n,i} \LC \ev_n^+ \RC \varphi_{n,j} \LC \ev_n^- \RC \sin \LB \lambda_{n+1} (2x - (\ev_{n,i}^+ + \ev_{n,j}^-) t) + 2P(t) \RB \boldD \RC - \nonumber \\
& \ \p_x \LC \sum_{i \ne j} \frac{ a_{n+1}^+ \cdot a_{n+1}^- }{4 \lambda_{n+1} (\ev_{n,j}^+ - \ev_{n,i}^-)}   \varphi_{n,i} \LC \ev_n^+ \RC \varphi_{n,j} \LC \ev_n^- \RC \sin \LC \lambda_{n+1}  (\ev_{n,j}^+ - \ev_{n,i}^-) t \RC \boldd \RC, \nonumber
\end{align}}
where $\boldD$ is such that
\begin{equation*}
\LC Df(0) - \frac{\ev^+(0) + \ev^-(0)}{2} \id \RC \boldD = \boldd.
\end{equation*}
Note that the existence of $\boldD$ is the consequence of strict hyperbolicity at 0.
\end{subequations}

Similar to \eqref{eq est v^1}, we have the following estimate for $\boldv_{n+1}^{2,\pm,\pm}$: for $k, l \in \{+, -\}$,
{\small
\begin{align}\label{eq est v^2}
& \LV \boldv_{n+1}^{2,k,l} \RV & \lesssim_{M} & \ a_{n+1}^2 \LC 1 + \frac{\lambda_n^2 |\nabla \boldu_n^{\delta_n}|}{\lambda_{n+1}} \RC + \frac{\lambda_n |a_{n+1}| |\nabla a_{n+1}|}{\lambda_{n+1}}, \nonumber \\
& \LV \nabla \boldv_{n+1}^{2,k,l} \RV & \lesssim_{M} & \ a_{n+1}^2 \LC \lambda_{n+1} + \lambda_n^2 |\nabla \boldu_n^{\delta_n}| + \frac{\lambda_n^3 |\nabla \boldu_n^{\delta_n}|^2 + \lambda_n^2 {|\nabla^2 \boldu_n^{\delta_n}|}}{\lambda_{n+1}} \RC \\
& & & + |a_{n+1}| |\nabla a_{n+1}| \LC \lambda_n + \frac{\lambda_n^2 |\nabla \boldu_n^{\delta_n}|}{\lambda_{n+1}} \RC + \frac{\lambda_n}{\lambda_{n+1}} \LC |\nabla a_{n+1}|^2 + |a_{n+1}| {|\nabla^2 a_{n+1}|} \RC. \nonumber
\end{align}}

This way we know that we only need to take into account of the contribution from $\boldv_{n+1}^{2}$ to the system \eqref{full sys} from the linear terms (corresponding to the fourth line of \eqref{full sys iterate}) of the form
\[
\p_t \boldv^{2,k,k}_{n+1} + \p_x \LC \ev_n^k \boldv^{2,k,k}_{n+1} \RC + \p_x \LB \LC Df(\boldu_n^{\delta_n}) - \ev_n^k \id \RC \boldv_{n+1}^{2,k,k} \RB, \quad k \in \{+, -\},
\] 
and
\[
\p_t \boldv^{2,k,l}_{n+1} + \p_x \LC \frac{\ev_n^k + \ev_n^l}{2} \boldv^{2,k,l}_{n+1} \RC + \p_x \LB \LC Df(\boldu_n^{\delta_n}) - \frac{\ev_n^+ + \ev_n^-}{2} \id \RC \boldv_{n+1}^{2,k,l} \RB, \quad k \ne l \in \{+, -\}.
\]
where the last terms in the above can be replaced by 
\[
\p_x \LB \LC Df(\boldu_n^{\delta_n}) - \ev_n^k \id \RC \boldv_{n+1}^{2,k,k} \RB = \p_x \LB \LC Df(0) - \ev^k(0) \id \RC \boldv_{n+1}^{2,k,k} + \err^{k,k}_3 \RB, \quad k \in \{+, -\},
\]
with
\begin{equation}\label{eq err2 kk}
\err^{k,k}_3 := \LB \LC Df(\boldu_n^{\delta_n}) - \ev_n^k \id \RC - \LC Df(0) - \ev^k(0) \id \RC \RB \boldv_{n+1}^{2,k,k} =  O \LC \LV \boldu_n^{\delta_n} \RV \LV \boldv_{n+1}^{2,k,k} \RV \RC,
\end{equation}
and for $k \ne l \in \{+, -\}$,
\[
\p_x \LB \LC Df(\boldu_n^{\delta_n}) - \frac{\ev_n^+ + \ev_n^-}{2} \id \RC \boldv_{n+1}^{2,k,l} \RB = \p_x \LB \LC Df(0) - \frac{\ev^+(0) + \ev^-(0)}{2} \id \RC \boldv_{n+1}^{2,k,l} + \err^{k,l}_3 \RB,
\]
where
\begin{equation}\label{eq err2 kl}
\begin{split}
\err^{k,l}_3 := & \ \LB \LC Df(\boldu_n^{\delta_n}) - \frac{\ev_n^+ + \ev_n^-}{2} \id \RC - \LC Df(0) - \frac{\ev^+(0) + \ev^-(0)}{2} \id \RC \RB \boldv_{n+1}^{2,k,l} \\
= & \ O \LC \LV \boldu_n^{\delta_n} \RV \LV \boldv_{n+1}^{2,k,l} \RV \RC.
\end{split}
\end{equation}

Following the same argument as before in obtaining \eqref{eq est R_1} we have for $k \in \{+, -\}$,
\begin{align*}
\p_t \boldv^{2,k,k}_{n+1} + \p_x \LC \ev_n^k \boldv^{2,k,k}_{n+1} \RC = & \  - \p_x \LB \frac14 \LC a_{n+1}^k \RC^2 \sum_{|i-j| = 1} \varphi_{n,i} \LC \ev_n^k \RC \varphi_{n,j} \LC \ev_n^k \RC \cos \LC \lambda_{n+1} (\ev_{n,j}^k - \ev_{n,i}^k) t \RC \boldb_k  \RB \\
&\  - \p_x \rem^{(4),k,k}_{n+1}, 
\end{align*}
where $\rem^{(4),k,k}_{n+1}$ is given in \eqref{eq def R_4 1}.

\medskip

Similarly, we obtain that
{\small
\begin{align*}
\p_t \boldv^{2,+,-}_{n+1} + \p_x \LC \frac{\ev_n^+ + \ev_n^-}{2} \boldv^{2,+,-}_{n+1} \RC = & \ \p_t \boldv^{2,-,+}_{n+1} + \p_x \LC \frac{\ev_n^- + \ev_n^+}{2} \boldv^{2,-,+}_{n+1} \RC \\
= & \  - \p_x \LB \frac{a_{n+1}^+ \cdot a_{n+1}^-}{4}  \sum_{i\ne j} \varphi_{n,i} \LC \ev_n^+ \RC \varphi_{n,j} \LC \ev_n^- \RC \cos \LC \lambda_{n+1} (\ev_{n,j}^+ - \ev_{n,i}^-) t \RC \boldd  \RB \\
&\  - \p_x \rem^{(4),+,-}_{n+1} 
\end{align*}}

\noindent where $\rem^{(4),+,-}_{n+1}$ is defined in \eqref{eq def R_4 2}. The estimates for $\rem^{(4),k,l}_{n+1}$, $k,l \in \{+,-\}$, are provided in \eqref{eq est R_4}.

Finally, recalling the definition of $\tilde{\boldb}_\pm$ from \eqref{eq exist B}, we have for $k \in \{+, -\}$ that
{\small
\begin{align*}
& \LC Df(0) - \ev^k(0) \id \RC \boldv_{n+1}^{2,k,k} + \frac12 \LC a_{n+1}^k \RC^2   Q^{k,k}_{n+1} \boldb_k \\
=: \ & \frac14 \LC a_{n+1}^k \RC^2  \LB \sum_j \LB \varphi_{n,j} \LC \ev_n^k \RC \RB^2 + \sum_{|i-j|=1} \varphi_{n,i} \LC \ev_n^k \RC \varphi_{n,j} \LC \ev_n^k \RC \cos \LC \lambda_{n+1} (\ev_{n,j}^k - \ev_{n,i}^k) t \RC \RB \boldb_k \\
& + \frac14 \LC a_{n+1}^k \RC^2 \sum_{|i-j|=1} \varphi_{n,i} \LC \ev_n^k \RC \varphi_{n,j} \LC \ev_n^k \RC \cos \LB \lambda_{n+1} (2x - (\ev_{n,i}^k + \ev_{n,j}^k) t) + 2P(t) \RB (\boldb_k -\tilde{\boldb}_k) + \rem^{(5),k,k}_{n+1},
\end{align*}}
and
{\small
\begin{align*}
& \LC Df(0) - \frac{\ev^+(0) + \ev^-(0)}{2} \id \RC \boldv_{n+1}^{2,+,-} + \frac12 (a_{n+1}^+ \cdot a_{n+1}^-) Q_{n+1}^{+,-} \boldd \\
=: \ & \frac{a_{n+1}^+ \cdot a_{n+1}^-}{4} \sum_{i\ne j} \varphi_{n,i} \LC \ev_n^+ \RC \varphi_{n,j} \LC \ev_n^- \RC \cos \LC \lambda_{n+1} (\ev_{n,j}^+ - \ev_{n,i}^-) t \RC \boldd + \rem^{(5),+,-}_{n+1}.
\end{align*}}

\noindent where $\rem^{(5),k,l}_{n+1}$ are given in \eqref{eq def R_5 1} and \eqref{eq def R_5 2}. Similar calculation applies to $\boldv_{n+1}^{2,-,+}$. 

The estimates for $\rem^{(5),k,l}_{n+1}$ can be found in \eqref{eq est R_5}. 

\bigskip

Putting together and using \eqref{eq cutoff} and \eqref{eq decomp b}--\eqref{eq exist B} yields
\begin{align}\label{2nd correct} 
& \p_t \boldv_{n+1}^2 + \p_x \LB Df(\boldu_{n+1}^{\delta_n}) \boldv_{n+1}^2 \RB = \sum_{k, l = \pm} \LCB \p_t \boldv_{n+1}^{2,k,l} + \p_x \LB Df(\boldu_{n+1}^{\delta_n}) \boldv_{n+1}^{2,k,l} \RB \RCB \nonumber\\
=\ & - \p_x \LCB \frac12 \sum_{k,l = \pm} \LC a_{n+1}^k \cdot a_{n+1}^l \RC Q^{k,l}_{n+1} \LB D^2f(0) : \LC \rev^k \otimes \rev^l \RC \RB   \RCB \nonumber \\
& + \p_x \LCB \frac14 \sum_{k = \pm} \LC a_{n+1}^k \RC^2  \sum_j \LB \varphi_{n,j} \LC \ev_n^k \RC \RB^2  \boldb_k \RCB \\
& + \p_x \LCB \frac14 \sum_{k = \pm} \LC a_{n+1}^k \RC^2 \sum_{|i-j|=1} \varphi_{n,i} \LC \ev_n^k \RC \varphi_{n,j} \LC \ev_n^k \RC \cos \LB \lambda_{n+1} (2x - (\ev_{n,i}^k + \ev_{n,j}^k) t) + 2P(t) \RB (\boldb_k -\tilde{\boldb}_k) \RCB  \nonumber \\
& + \p_x \sum_{k,l = \pm} \LB \rem^{(4),k,l}_{n+1} + \rem^{(5),k,l}_{n+1} + \err_3^{k,l} \RB  \nonumber \\
=\ & - \p_x \LCB \frac12 \sum_{k,l = \pm} \LC a_{n+1}^k \cdot a_{n+1}^l \RC Q^{k,l}_{n+1} \LB D^2f(0) : \LC \rev^k \otimes \rev^l \RC \RB   \RCB + \p_x \LCB \frac14 \sum_{k = \pm} \LC a_{n+1}^k \RC^2  (1 + s_k)  \boldb_k \RCB   \nonumber \\
& + \p_x \sum_{k,l = \pm} \LB \rem^{(4),k,l}_{n+1} + \rem^{(5),k,l}_{n+1} + \err_3^{k,l} \RB, \nonumber
\end{align}
where 
\begin{align*}
s_\pm := & \sum_{|i-j|=1} \alpha^{\pm} \varphi_{n,i} \LC \ev_n^\pm \RC \varphi_{n,j} \LC \ev_n^\pm \RC \cos \LB \lambda_{n+1} (2x - (\ev_{n,i}^\pm + \ev_{n,j}^\pm) t) + 2P(t) \RB + \\
& \sum_{|i-j|=1} \beta^{\mp} \varphi_{n,i} \LC \ev_n^\mp \RC \varphi_{n,j} \LC \ev_n^\mp \RC \cos \LB \lambda_{n+1} (2x - (\ev_{n,i}^\mp + \ev_{n,j}^\mp) t) + 2P(t) \RB.
\end{align*}
From Lemma \ref{lemm structure} we have
\be\label{eq est s}
|s_\pm| \le \frac{\ep}{1 - \ep}, \qquad |\nabla s_\pm| \lesssim_M \ep \LC \lambda_{n+1} + \lambda_n |\nabla \boldu_n^{\delta_n}| \RC.
\ee

\subsection{System at $(n+1)$st iteration}
With all of the above effort, we finally arrive at the system satisfied by $\boldu_{n+1}$:
\begin{align}\label{eqn syst at n+1}
& \ \  \p_t \boldu_{n+1} + \p_x f(\boldu_{n+1}) \nonumber \\
= & \ - \p_x\LB \LC E_{n,-}^{\delta_n} \boldb_- + E_{n,+}^{\delta_n} \boldb_+ \RC + \LC f^{\delta_n}(\boldu_n) - f(\boldu_n^{\delta_n}) \RC \RB \nonumber \\
& \ + \p_x \LCB \frac14 \sum_{k = \pm} \LC a_{n+1}^k \RC^2 (1 + s_k)  \boldb_k \RCB \\
& \ + \p_x \underbrace{\LC \sum^2_{i=1} \sum_{k = \pm} \rem^{(i),k}_{n+1} + \sum^5_{i = 3} \sum_{k,l = \pm} \rem^{(i),k,l}_{n+1} + \err_1 + \err_2 + \sum_{k,l = \pm} \err^{k,l}_3  \RC}_{=: -\boldW_{n+1}}, \nonumber
\end{align}
which is equivalent to
{\small
\begin{equation}\label{eqn syst at n+1}
\begin{split}
\p_t \boldu_{n+1} + \p_x \LB f(\boldu_{n+1}) + \sum_{k = \pm} \LC E_{n,k}^{\delta_n}  - \frac{1+s_k}{4} \LC a_{n+1}^k \RC^2  \RC \boldb_k + \boldW_{n+1} + {f^{\delta_n}(\boldu_n)} - f(\boldu_n^{\delta_n}) \RB = 0.
\end{split}
\end{equation}}

This way we can complete the $(n+1)$st iteration $(\boldu_{n+1}, E_{n+1, \pm})$ by setting
\begin{equation}\label{eq E_n+1}
E_{n+1, \pm} = E_{n,\pm}^{\delta_n}  - \frac{1+s_\pm}{4} \LC \amp \RC^2 + w_{n+1,\pm},
\end{equation}
where $w_{n+1,\pm}$ are obtained through Cramer's rule
\begin{equation}\label{eq def w}
\begin{split}
& w_{n+1,+} = \frac{\det \LC \boldb_-, \boldW_{n+1} + f^{\delta_n}(\boldu_n) - f(\boldu_n^{\delta_n}) \RC}{\det (\boldb_-, \boldb_+)}, \\    
& w_{n+1,-} = \frac{\det \LC \boldW_{n+1} + f^{\delta_n}(\boldu_n) - f(\boldu_n^{\delta_n}), \boldb_+ \RC}{\det (\boldb_-, \boldb_+)}.  
\end{split}
\end{equation}
Keep in mind that at this stage the choice for $\amp$ is completely open.

\subsection{Estimate on $\boldW_{n+1}$}\label{subsec W}
To obtain the estimate for $W_{n+1}$, we further deduce from \eqref{full sys}, \eqref{eq err2}, \eqref{eq err2 kk}, and \eqref{eq err2 kl} that
\begin{equation}\label{eq est err}
\err_1 \lesssim_{M} |\boldv_{n+1}|^3, \qquad \err_2 \lesssim_{M} |\boldv_{n+1}| |\boldv_{n+1}^2|, \qquad  \err_3^{k,l} \lesssim_{M} \LV \boldu_n^{\delta_n} \RV | \boldv_{n+1}^{2,k,l} |,
\end{equation}
and 
\begin{equation}\label{eq est grad err}
\begin{split}
\LV \nabla \err_1 \RV & \lesssim_{M} (1 + |\boldv_{n+1}| + |\boldv_{n+1}|^2) (|\nabla \boldu_n^{\delta_n}| + |\nabla \boldv_{n+1}|),\\ 
\LV \nabla \err_2 \RV & \lesssim_{M} \LV \nabla \boldu_n^{\delta_n} \RV |\boldv_{n+1}| |\boldv_{n+1}^2| + |\nabla \boldv_{n+1}^1| |\boldv_{n+1}^2| + |\boldv_{n+1}| |\nabla \boldv_{n+1}^2|,  \\
\LV \nabla \err_3^{k,l} \RV & \lesssim_{M} \LV \nabla \boldu_n^{\delta_n} \RV | \boldv_{n+1}^{2,k,l} | + \LV \nabla \boldv_{n+1}^{2,k,l} \RV.
\end{split}
\end{equation}

Putting together, the estimates on $\boldW_{n+1}$ read
\begin{align*}
|\boldW_{n+1}| \lesssim_{M} & \LV a_{n+1} \RV \LC \frac{ \lambda_n |\nabla \boldu_n^{\delta_n}|}{\lambda_{n+1}} + \frac{1}{\lambda_n} \RC + \frac{\LV \nabla a_{n+1} \RV}{\lambda_{n+1}} \\
& + a_{n+1}^2 \LB |\boldu_n^{\delta_n}|  \LC 1 + \frac{\lambda_n |\nabla \boldu_n^{\delta_n}|}{\lambda_{n+1}} \RC^2 + \LC \frac{\lambda_n |\nabla \boldu_n^{\delta_n}|}{\lambda_{n+1}} \RC^2 + |\boldu_n^{\delta_n}|^2 \RB + \frac{\LV \nabla a_{n+1} \RV^2 |\boldu_n^{\delta_n}|}{\lambda_{n+1}^2} \\
& + a_{n+1}^2 \LC \frac{ \lambda_n^2 |\nabla \boldu_n^{\delta_n}|}{\lambda_{n+1}} + \frac{1}{\lambda_{n}} \RC + \frac{\lambda_n |a_{n+1}| |\nabla a_{n+1}|}{\lambda_{n+1}} \\
& + \LB \LV a_{n+1} \RV \LC 1 + \frac{\lambda_n |\nabla \boldu_n^{\delta_n}|}{\lambda_{n+1}} \RC + \frac{\LV \nabla a_{n+1} \RV}{\lambda_{n+1}} + a_{n+1}^2 \LC 1 + \frac{\lambda_n^2 |\nabla \boldu_n^{\delta_n}|}{\lambda_{n+1}} \RC + \frac{\lambda_n |\boldu_n^{\delta_n}| |\nabla \boldu_n^{\delta_n}|}{\lambda_{n+1}} \RB^3 \\
& + |\boldu_n^{\delta_n}| \LB a_{n+1}^2 \LC 1 + \frac{\lambda_n^2 |\nabla \boldu_n^{\delta_n}|}{\lambda_{n+1}} \RC + \frac{\lambda_n |\boldu_n^{\delta_n}| |\nabla \boldu_n^{\delta_n}|}{\lambda_{n+1}} \RB^2, \\
|\nabla \boldW_{n+1}| \lesssim_{M} & \frac{\LV a_{n+1} \RV  \lambda_{n+1}}{\lambda_n} (1 + |a_{n+1}|) + |a_{n+1}| (1 + |a_{n+1}| \lambda_n) \LC \lambda_n |\nabla \boldu_n^{\delta_n}| +  \frac{\lambda_n^2 |\nabla \boldu_n^{\delta_n}|^2 + \lambda_n {|\nabla^2 \boldu_n^{\delta_n}|}}{\lambda_{n+1}} \RC + \\
& (1 + |a_{n+1}| \lambda_n) \LB \LV \nabla a_{n+1} \RV \LC \frac{\lambda_n |\nabla \boldu_n^{\delta_n}|}{\lambda_{n+1}} + \frac{1}{\lambda_n} \RC + \frac{{\LV \nabla^2 a_{n+1} \RV}}{\lambda_{n+1}} \RB + \frac{\lambda_n}{\lambda_{n+1}} |\nabla a_{n+1}|^2 +  \\
& |\nabla \boldu_n^{\delta_n}| \LV \boldv_{n+1}^{1} \RV^2 + |\boldu_n^{\delta_n}| \LV \boldv_{n+1}^{1} \RV {\LV \nabla \boldv_{n+1}^{1} \RV} + a_{n+1}^2 \LC |\boldu_n^{\delta_n}|^2 \RC \LC \lambda_n |\nabla \boldu_n^{\delta_n}| + \lambda_{n+1} \RC + \\
& a_{n+1}^2 |\boldu_n^{\delta_n}| |\nabla \boldu_n^{\delta_n}| + \frac{a_{n+1}^2 \lambda_n^2 |\nabla \boldu_n^{\delta_n}|}{\lambda_{n+1}} \LC |\nabla \boldu_n^{\delta_n}| + \frac{\lambda_n |\nabla \boldu_n^{\delta_n}|^2 + {|\nabla^2 \boldu_n^{\delta_n}|}}{\lambda_{n+1}} \RC + \\
& |a_{n+1}| |\nabla a_{n+1}| \LC \ep + |\boldu_n^{\delta_n}|^2 + \frac{\lambda_n^2 |\nabla \boldu_n^{\delta_n}|^2}{\lambda_{n+1}^2} \RC + \LV \nabla \err_1 \RV +  \LV \nabla \err_2 \RV + \sum_{k,l = \pm} \LV \nabla \err_3^{k,l} \RV.
\end{align*}

\section{Choice for $\amp$}\label{sec amp}
The goal is to choose some appropriate $\amp$ such that $E_{n,\pm} \to 0$ as $n\to \infty$. We pick two parameters $0 < \beta < \gamma < 1$ to be determined later, and define a smooth function $\phi_{\beta, \gamma} : [0, \infty) \to [0,1]$ such that 
\begin{equation*}
\phi_{\beta, \gamma}(s) = \left\{\begin{aligned}
 & 0, & \quad & \text{ when } 0 \le s \le \beta, \\
 & 1, & & \text{ when } \gamma \le s,
\end{aligned}\right. 
\end{equation*}
and $\phi_{\beta, \gamma}$ is nondecreasing; see Figure \ref{fig phi}.
\begin{figure}[h]
  \includegraphics[page=3,scale=0.85]{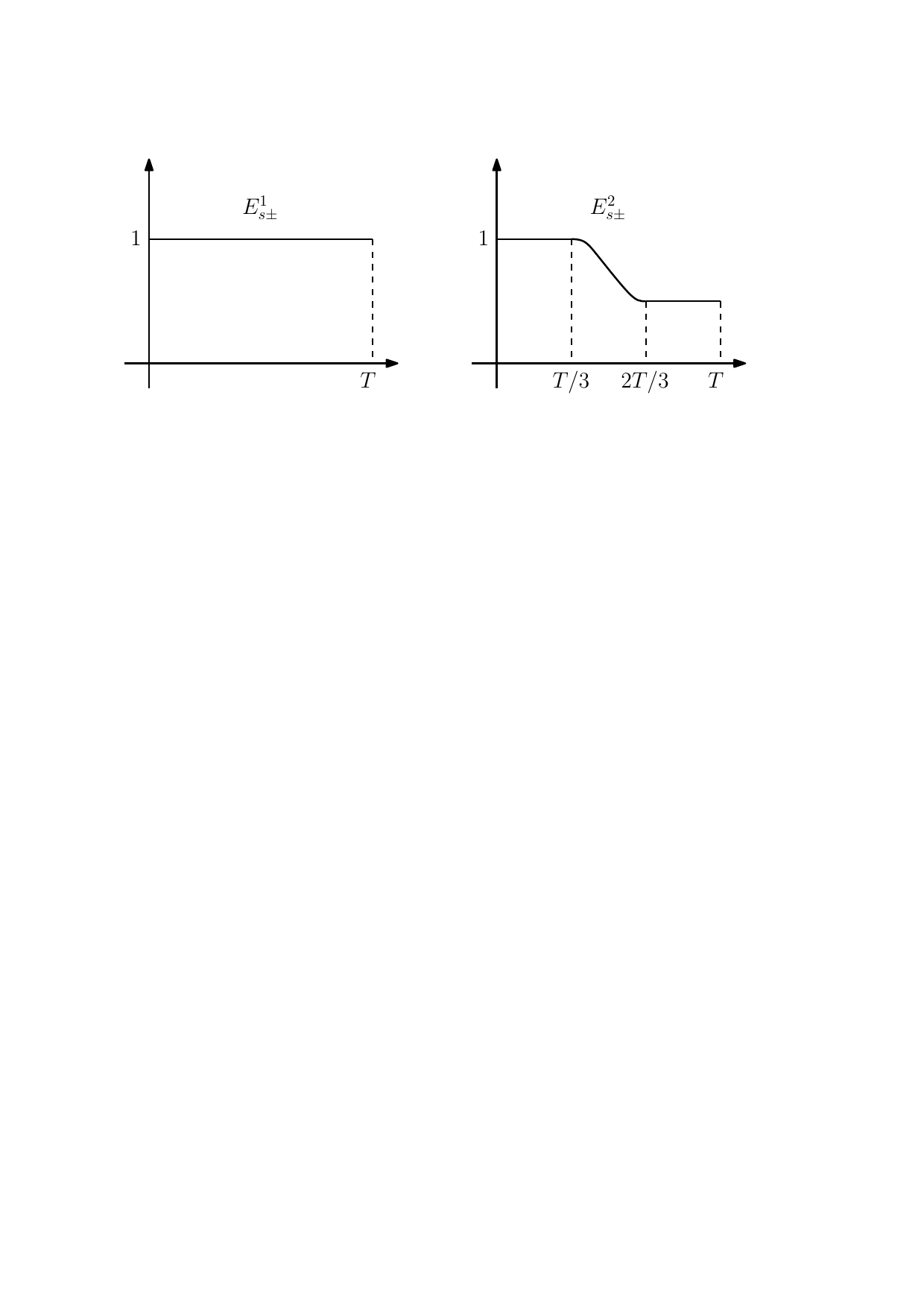}
  \vspace{-2ex}
  \caption{The graph of $\phi_{\beta, \gamma}$}
  \label{fig phi}
\end{figure}

Looking to obtain a bound on $E_{n,\pm}$ of the kind that 
\[
0 \le E_{n,\pm} \le F_n \qquad \text{with} \quad 0 < F_n \searrow 0 \text{ as } n\to \infty,
\]
we will choose $\amp \ge 0$ to be such that
\begin{equation}\label{eq choice for a}
\LC \amp \RC^2 = 2 \phi_{\beta, \gamma}^2 \LC \frac{E_{n,\pm}^{\delta_n}}{F_n} \RC E_{n, \pm}^{\delta_n},
\end{equation}
this leads to
\begin{align*}
\frac{1}{\sqrt2}\nabla \amp = & \ \LB \phi'_{\beta, \gamma} \LC {E_{n,\pm}^{\delta_n}}/{F_n} \RC \frac{\sqrt{E_{n,\pm}^{\delta_n}}}{F_n} \nabla E_{n, \pm}^{\delta_n} +   \phi_{\beta, \gamma} \LC {E_{n,\pm}^{\delta_n}}/{F_n} \RC \frac{\nabla E_{n, \pm}^{\delta_n}}{2 \sqrt{E_{n, \pm}^{\delta_n}}} \RB, \\
\frac{1}{\sqrt2} \nabla^2 \amp = & \ \LB \phi''_{\beta, \gamma}  \frac{\sqrt{E_{n,\pm}^{\delta_n}}}{F_n^2} + \frac{ \phi'_{\beta, \gamma} }{2 F_n \sqrt{E_{n, \pm}^{\delta_n}}} - \frac{\phi_{\beta, \gamma} }{\LC E_{n, \pm}^{\delta_n}\RC^{3/2} } \RB \nabla E_{n, \pm}^{\delta_n} \otimes \nabla E_{n, \pm}^{\delta_n}  \\
& \quad + \LC \phi'_{\beta, \gamma} \frac{\sqrt{E_{n,\pm}^{\delta_n}}}{F_n} + \frac{\phi_{\beta, \gamma}}{2 \sqrt{E_{n, \pm}^{\delta_n}}} \RC \nabla^2 E_{n, \pm}^{\delta_n}.
\end{align*}
Note that $\phi_{\beta, \gamma}(s) \ne 0$ only for $s \ge \beta$, and $\phi'_{\beta, \gamma}(s) \ne 0$ for $\beta \le s \le \gamma$. Thus
\begin{equation}\label{eq est amp}
\begin{aligned}
& |\amp| \le \sqrt{2 F_n},  \qquad |\nabla \amp | \lesssim_{\beta,\gamma} \frac{\LV \nabla E_{n,\pm}^{\delta_n} \RV}{\sqrt{F_n}}, \\
& |\nabla^2 \amp | \lesssim_{\beta, \gamma} \frac{\LV \nabla^2 E_{n,\pm}^{\delta_n} \RV}{\sqrt{F_n}} + \frac{\LV \nabla E_{n,\pm}^{\delta_n} \RV^2}{F_n^{3/2}}.
\end{aligned}
\end{equation}

\section{Induction argument and convergence}\label{sec induct}
We pick the super-geometric sequence $\{\lambda_n\}$ to satisfy
\begin{equation}\label{eq lambda_n}
\lambda_{n+1} \gtrsim \lambda_n^5.
\end{equation}
Now we aim to establish the following estimates using an induction argument
\begin{equation}\label{eq induct est}
c_q F_n \le E_{n,\pm} \le F_n, \qquad \| \nabla \boldu_n \|, \| \nabla E_{n,\pm} \| \lesssim_{M} \lambda_n
\end{equation}
for some $c_q  \in (0,1)$, with some well-designed bounds $F_n$. 

\subsection{Bounds on $\boldu_n$}

From \eqref{eq u_n+1}, \eqref{eq est v^1}, and \eqref{eq est v^2} we find that
\begin{align*}
\begin{split}
|\boldu_{n+1} - \boldu_n^{\delta_n}| & \lesssim_{M}  \LV a_{n+1} \RV \LC 1 + \frac{\lambda_n |\nabla \boldu_n^{\delta_n}|}{\lambda_{n+1}} \RC 
\\&+ \frac{\LV \nabla a_{n+1} \RV}{\lambda_{n+1}} + a_{n+1}^2 \LC 1 + \frac{\lambda_n^2 |\nabla \boldu_n^{\delta_n}|}{\lambda_{n+1}} \RC + \frac{\lambda_n |a_{n+1}| |\nabla a_{n+1}|}{\lambda_{n+1}}.
\end{split}
\end{align*}
From the induction assumption \eqref{eq induct est} and the estimates on $\amp$ in Section \ref{sec amp}, it follows that
\begin{equation}\label{eq 2nd grad}
\begin{split}
& |\nabla \boldu_n^{\delta_n}|, |\nabla E_{n,\pm}^{\delta_n}| \lesssim_{M} \lambda_n, \qquad |\nabla^2 \boldu_n^{\delta_n}|, |\nabla^2 E_{n,\pm}^{\delta_n}|  \lesssim_{M} \frac{\lambda_n}{\delta_n}, \\
& |a_{n+1}| \lesssim \sqrt{F_n}, \qquad |\nabla a_{n+1}| \lesssim_{M} \frac{\lambda_n}{\sqrt{F_n}}, \qquad |\nabla^2 a_{n+1}| \lesssim_{M} \frac{\lambda_n}{\sqrt{F_n}} \LC \frac{1}{\delta_n} + \lambda_n \RC.
\end{split}
\end{equation}
By choosing 
\begin{equation*}
F_n \ge \frac{\lambda_n}{\lambda_{n+1}}
\end{equation*}
we obtain
\begin{equation}\label{est diff u}
|\boldu_{n+1} - \boldu_n^{\delta_n}| \le C_*  \sqrt{F_n} 
\end{equation}
for some constant $C_* = C_*(M) > 0$. Since $\boldu_0 = 0$, it then follows that 
\begin{proposition}\label{prop prelim bound u}
For $n\ge 1$, there is a choice for $\{ \lambda_n \}$ such that
\begin{equation}\label{eq bound u}
\|\boldu_n\| \le C_* \sum^{n-1}_{j = 0} \sqrt{F_j},
\end{equation}
where $\|\cdot\|$ is the $L^\infty$-norm. 
\end{proposition}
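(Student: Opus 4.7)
The plan is to establish \eqref{eq bound u} by a straightforward induction on $n$, piggybacking on the pointwise estimate \eqref{est diff u} that has already been derived under the induction hypothesis \eqref{eq induct est}. The base case is immediate: by the choice of starting subsolution \eqref{const subsoln} we have $\boldu_0 = 0$, so $\|\boldu_0\| = 0 \le C_* \sum_{j=0}^{-1} \sqrt{F_j}$ (empty sum). For the inductive step, I would assume that the bound \eqref{eq bound u} holds at level $n$ together with the full set of induction estimates \eqref{eq induct est}, and derive the bound at level $n+1$.

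The inductive step hinges on the decomposition \eqref{eq u_n+1}, which gives
\begin{equation*}
\boldu_{n+1} - \boldu_n^{\delta_n} = \boldv_{n+1} = \boldv_{n+1}^1 + \boldv_{n+1}^2.
\end{equation*}
Two ingredients are then combined. First, since $\boldu_n^{\delta_n} = \eta_{\delta_n} \ast \boldu_n$ is a convolution against a smooth non-negative bump of unit mass, Young's inequality gives $\|\boldu_n^{\delta_n}\| \le \|\boldu_n\|$. Second, the already-derived bound \eqref{est diff u} yields $\|\boldu_{n+1} - \boldu_n^{\delta_n}\| \le C_*\sqrt{F_n}$. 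The triangle inequality then produces
\begin{equation*}
\|\boldu_{n+1}\| \le \|\boldu_n^{\delta_n}\| + \|\boldu_{n+1} - \boldu_n^{\delta_n}\| \le \|\boldu_n\| + C_*\sqrt{F_n} \le C_* \sum_{j=0}^{n} \sqrt{F_j},
\end{equation*}
where the last inequality uses the induction hypothesis.

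The content of the proposition is therefore essentially a bookkeeping telescoping argument; the real work has already been spent in deriving \eqref{est diff u} from the amplitude estimates \eqref{eq est amp}, the oscillator estimates \eqref{eq est v^1}--\eqref{eq est v^2}, and the induction bounds \eqref{eq 2nd grad}. The only subtle point to flag is the compatibility of the sequences $\{\lambda_n\}$ and $\{F_n\}$: the derivation of \eqref{est diff u} used both the super-geometric growth \eqref{eq lambda_n} and the threshold $F_n \ge \lambda_n/\lambda_{n+1}$ needed to absorb the error factors of the form $\lambda_n |\nabla \boldu_n^{\delta_n}|/\lambda_{n+1}$ appearing in \eqref{eq est v^1}--\eqref{eq est v^2}. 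Provided $\{\lambda_n\}$ and $\{F_n\}$ are chosen so that these constraints hold and $\sum_j \sqrt{F_j} < \infty$ (so that the partial sums stay inside the neighborhood of $0$ where hyperbolicity and all Taylor expansions of $f$ are valid), the induction closes and the stated bound follows. I do not anticipate any essential obstacle beyond verifying this consistency, which will be part of the explicit choice of parameters made in the subsequent subsections.
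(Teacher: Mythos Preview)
Your proposal is correct and matches the paper's approach exactly: the paper simply writes ``Since $\boldu_0 = 0$, it then follows that'' before stating the proposition, leaving the telescoping/induction argument implicit. You have spelled out precisely the details the paper omits---the mollifier bound $\|\boldu_n^{\delta_n}\|\le \|\boldu_n\|$, the triangle inequality, and the role of the parameter constraints---so there is nothing to correct.
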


Choosing a summable sequence $\{F_n\}$, the above implies that $\{\boldu_n\}$ is bounded, which, combining with the estimate in Section \ref{subsec W}, yields that 
\begin{equation*}
|\boldW_{n+1}| \lesssim_{M} \LC a_{n+1}^2 \| \boldu_n \| + |a_{n+1}|^3 \RC \lesssim_{M} F_n \sum^{n}_{j = 0} \sqrt{F_j}.
\end{equation*}

\subsection{Bounds on $E_{n,\pm}$}
Choose $\delta_n$ so that
\begin{equation*}
\delta_n \lambda_n \le F_n \sum^{n}_{j = 0} \sqrt{F_j},
\end{equation*}
then we have
\begin{equation}\label{eq prelim est w}
|w_{n+1}| \le C_0 F_n \sum^{n}_{j = 0} \sqrt{F_j}
\end{equation}
for some constant $C_0 = C_0(M) > 0$. 

\begin{proposition}\label{prop bound E}
For $\ep < \frac12$, there exist suitable parameters $\beta, \gamma$, $F_n$ and $c_q  \in (0,1)$ such that if $E_{n,\pm}$ satisfies \eqref{eq induct est}, then the following estimate for $E_{n+1, \pm}$ holds:
\begin{equation}\label{eq est E_n+1}
c_q  F_{n+1} \le E_{n+1, \pm} \le F_{n+1}.
\end{equation}
\end{proposition}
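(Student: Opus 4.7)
The plan is to plug the amplitude choice \eqref{eq choice for a} into the update formula \eqref{eq E_n+1}, producing
\begin{equation*}
E_{n+1, \pm} = E^{\delta_n}_{n,\pm}\,\Psi_\pm + w_{n+1, \pm}, \qquad \Psi_\pm := 1 - \frac{1+s_\pm}{2}\,\phi^2_{\beta, \gamma}\bigl(E^{\delta_n}_{n,\pm}/F_n\bigr).
\end{equation*}
Writing $\sigma := \ep/(1-\ep)\in (0,1)$ (valid since $\ep<1/2$), the bound $|s_\pm|\le \sigma$ from \eqref{eq est s} (itself a consequence of Lemma \ref{lemm structure}) gives $\Psi_\pm\in[\tfrac{1-\sigma}{2},\,1]$, with the sharper interval $[\tfrac{1-\sigma}{2},\,\tfrac{1+\sigma}{2}]$ active whenever $E^{\delta_n}_{n,\pm}/F_n\ge\gamma$. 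My iteration design is geometric decay $F_n=F_0\theta^n$ with a rate $\theta\in(\max\{\gamma,\tfrac{1+\sigma}{2}\},\,1)$, and $c_q:=\tfrac{(1-\sigma)\beta}{2\theta}$.

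The argument reduces to a three-case analysis of $\rho_n:=E^{\delta_n}_{n,\pm}/F_n$, whose range is $[c_q,1]$ by the inductive hypothesis \eqref{eq induct est} together with monotonicity of convolution against a nonnegative kernel. If $\rho_n\le\beta$ then $\Psi_\pm=1$, so $E^{\delta_n}_{n,\pm}\Psi_\pm\in[c_qF_n,\,\beta F_n]$; if $\beta<\rho_n<\gamma$, then $\Psi_\pm\in[\tfrac{1-\sigma}{2},\,1]$, giving $E^{\delta_n}_{n,\pm}\Psi_\pm\in[\tfrac{(1-\sigma)\beta}{2}F_n,\,\gamma F_n]$; and if $\rho_n\ge\gamma$, then $E^{\delta_n}_{n,\pm}\Psi_\pm\in[\tfrac{(1-\sigma)\gamma}{2}F_n,\,\tfrac{1+\sigma}{2}F_n]$. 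Taking the worst case across the three regimes, the upper inequality $E_{n+1,\pm}\le F_{n+1}$ reduces to $\max\{\gamma,\tfrac{1+\sigma}{2}\}F_n+|w_{n+1,\pm}|\le\theta F_n$, and the lower inequality $E_{n+1,\pm}\ge c_qF_{n+1}$ reduces, thanks to $c_q\le\tfrac{(1-\sigma)\beta}{2}$, to $c_q(1-\theta)F_n\ge|w_{n+1,\pm}|$. Note that $c_q\le\beta$ automatically since $\theta>\tfrac{1-\sigma}{2}$, so the inductive ranges are consistent.

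The main obstacle is absorbing the correction $w_{n+1,\pm}$ into both inequalities. The preliminary estimate \eqref{eq prelim est w} yields $|w_{n+1,\pm}|/F_n\le C_0\sum_{j=0}^n\sqrt{F_j}\le C_0\sqrt{F_0}/(1-\sqrt{\theta})$, which can be made smaller than both $\theta-\max\{\gamma,\tfrac{1+\sigma}{2}\}$ and $c_q(1-\theta)$ by shrinking $F_0$; this smallness corresponds exactly to the confinement to a small ball $B\subset B(0,\eta)$ in the main theorem. It remains only to verify that $F_n=F_0\theta^n$ is compatible with the bookkeeping constraints $F_n\ge\lambda_n/\lambda_{n+1}$ and $\delta_n\lambda_n\le F_n\sum_{j=0}^n\sqrt{F_j}$, both of which follow routinely from the super-geometric growth $\lambda_{n+1}\ge\lambda_n^5$ of \eqref{eq lambda_n} with an appropriate choice of $\delta_n$.
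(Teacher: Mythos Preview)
Your approach mirrors the paper's: a three-case analysis of where $\rho_n=E^{\delta_n}_{n,\pm}/F_n$ sits relative to $\beta$ and $\gamma$, followed by a geometric choice of $F_n$ and a smallness argument to absorb $w_{n+1,\pm}$. The structure is correct, but your explicit parameter choice for $c_q$ breaks the lower bound in the intermediate regime.

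You set $c_q:=\tfrac{(1-\sigma)\beta}{2\theta}$ with $\theta<1$, which gives $c_q>\tfrac{(1-\sigma)\beta}{2}$, not $c_q\le\tfrac{(1-\sigma)\beta}{2}$ as you claim two sentences later. This matters in your middle case $\beta<\rho_n<\gamma$: there the lower bound on $E^{\delta_n}_{n,\pm}\Psi_\pm$ is exactly $\tfrac{(1-\sigma)\beta}{2}F_n=c_q\theta F_n=c_qF_{n+1}$, so the inequality $E_{n+1,\pm}\ge c_qF_{n+1}$ would require $|w_{n+1,\pm}|\le 0$. No amount of shrinking $F_0$ saves you, because the gap you need to absorb $w$ into is identically zero.

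The fix is to take $c_q$ \emph{strictly} smaller than $\tfrac{(1-\sigma)\beta}{2}$ (this is precisely condition \eqref{para cond} in the paper), so that the minimum of the three lower bounds really is $c_qF_n$ coming from Case~1, and the residual $c_q(1-\theta)F_n$ is genuinely positive. With that correction your reduction ``$c_q(1-\theta)F_n\ge|w_{n+1,\pm}|$'' becomes valid across all three cases, and the rest of your argument goes through.
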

\begin{proof}
We will divide the argument into the following three cases, according to the definition of $\amp$.

{\bf Case 1. $E_{n,\pm} \le \beta F_n$} where $\beta$ is introduced in Section \ref{sec amp}, then 
\[
\LC c_q - C_0 \sum^{n}_{j = 0} \sqrt{F_j} \RC F_n \le E_{n+1,\pm} = E_{n,\pm} + w_{n+1,\pm} \le \LC \beta + C_0 \sum^{n}_{j = 0} \sqrt{F_j} \RC F_n.
\]
Thus we need
\begin{equation}\label{eq cond alpha beta 1}
\beta + C_0 \sum^{n}_{j = 0} \sqrt{F_j} \le \frac{F_{n+1}}{F_n} \le 1 - \frac{1}{c_q } C_0 \sum^{n}_{j = 0} \sqrt{F_j}.
\end{equation}
We also need a requirement on $\kappa$ and $\beta$
\[
1 - \beta \ge \LC 1 + \frac{1}{c_q } \RC C_0 \sum^{n}_{j = 0} \sqrt{F_j}. 
\]

{\bf Case 2. $E_{n,\pm} \ge \gamma F_n$}. In this case we have from \eqref{eq est s} and \eqref{eq choice for a} that
\begin{equation*}
\begin{split}
\LC \frac{(1 - 2\ep)\gamma}{2 - 2\ep} - C_0 \sum^{n}_{j = 0} \sqrt{F_j} \RC F_n &\le E_{n+1,\pm}
 = \frac{1-s_k}{2} E_{n,\pm} + w_{n+1,\pm} \\&\le \LC \frac{1}{2 - 2\ep} + C_0 \sum^{n}_{j = 0} \sqrt{F_j} \RC F_n.
\end{split}
\end{equation*}
So we need
\begin{equation}\label{eq cond alpha beta 2}
\frac{1}{2 - 2\ep} + C_0 \sum^{n}_{j = 0} \sqrt{F_j} \le \frac{F_{n+1}}{F_n} \le \frac{1}{c_q } \LC \frac{(1 - 2\ep)\gamma}{2 - 2\ep} - C_0 \sum^{n}_{j = 0} \sqrt{F_j} \RC.
\end{equation}
The above also imposes the following condition
\[
\frac{(1 - 2\ep)\gamma}{(2 - 2\ep) c_q } - \frac{1}{2 - 2\ep} \ge \LC 1 + \frac{1}{c_q } \RC C_0 \sum^{n}_{j = 0} \sqrt{F_j}. 
\]

{\bf Case 3. $\beta F_n \le E_{n,\pm} \le \gamma F_n$}. Now we have
\begin{align*}
& E_{n+1,\pm} \ge \LC 1 - \frac{1}{2 - 2\ep} \RC E_{n,\pm} - |w_{n+1, \pm}| \ge \LC \frac{(1 - 2\ep)\beta}{2 - 2\ep} - C_0 \sum^{n}_{j = 0} \sqrt{F_j} \RC F_n, \\
& E_{n+1,\pm} \le E_{n,\pm} + |w_{n+1, \pm}| \le \LC \gamma + C_0 \sum^{n}_{j = 0} \sqrt{F_j} \RC F_n.
\end{align*}
Thus for 
\begin{equation}\label{eq cond alpha beta 3}
\gamma + C_0 \sum^{n}_{j = 0} \sqrt{F_j} \le \frac{F_{n+1}}{F_n} \le \frac{(1 - 2\ep)\beta}{(2 - 2\ep)c_q } - \frac{1}{c_q } C_0 \sum^{n}_{j = 0} \sqrt{F_j}.
\end{equation}
one would obtain \eqref{eq est E_n+1}. As in the previous cases, the above leads to assuming
\[
\frac{(1 - 2\ep)\beta}{(2 - 2\ep)c_q } - \gamma \ge \LC 1 + \frac{1}{c_q } \RC C_0 \sum^{n}_{j = 0} \sqrt{F_j}. 
\]

Now for some $0 < r < 1$, take
\[
F_n = \frac{\epsilon^2 r^{2n}}{C_0^2}
\]
for some $0 < \epsilon \ll 1$. Then
\[
\frac{F_{n+1}}{F_n} = r^2, \qquad C_0 \sum^\infty_{n=0} \sqrt{F_n} = \frac{\epsilon}{1- r}.
\]
If we consider 
\begin{equation}\label{para cond}
c_q < \frac{(1 - 2\ep) \beta}{2 - 2\ep} < \beta < \gamma < \frac{1}{2 - 2\ep}, \qquad c_q  < \frac{(1 - 2\ep) \beta}{(2 - 2\ep) \gamma},
\end{equation}
then \eqref{eq cond alpha beta 1}--\eqref{eq cond alpha beta 3} amount to asking
\[
\frac12 + \frac{\epsilon}{1 - r} \le r^2 \le 1 - \frac{\epsilon}{c_q  (1 - r)},
\]
which easily holds if we choose $r^2 > \frac12$ and $\epsilon$ sufficiently small. 
\end{proof}

Summarizing the above, we have obtained that 
\begin{equation}\label{est sum}
\boxed{
\|\boldu_n\|_{L^\infty} \le \frac{C_* \epsilon}{C_0(1-r)}, \qquad \frac{c_q  \epsilon^2 r^{2n}}{C_0^2} \le E_{n,\pm} \le \frac{\epsilon^2 r^{2n}}{C_0^2}, \qquad \delta_n \lambda_n \le \frac{r^{2n} \epsilon^3}{C_0^3 (1-r)}.
}
\end{equation}
Choosing sufficiently small $\epsilon$ we are able to achieve that $\|\boldu_n\|_{L^\infty} \le 1$, and hence we may take $M = 1$ in all of the preceding estimates.

\subsection{Bounds on $\|\nabla \boldu_n\|$}
Assuming \eqref{eq induct est}, it follows from \eqref{eq est v^1}, \eqref{eq est v^2}, and \eqref{eq 2nd grad}  that
\begin{equation}\label{eq est grad v}
\begin{split}
\| \nabla \boldv_{n+1}^1 \| & \lesssim \sqrt{F_n} \LC \lambda_{n+1} + \frac{\lambda_n^2}{\delta_n \lambda_{n+1}} \RC + \frac{\lambda_n}{\sqrt{F_n}} \LC 1 + \frac{1}{\delta_n \lambda_{n+1}} + \frac{\lambda_n}{\lambda_{n+1}} \RC \\
\| \nabla \boldv_{n+1}^2 \| & \lesssim F_n \LC \lambda_{n+1} + \frac{\lambda_n^3}{\delta_n \lambda_{n+1}} \RC + \frac{\lambda_n^2}{\lambda_{n+1}} \LC \lambda_{n+1} + \frac{1}{\delta_n} + \frac{\lambda_n}{F_n} \RC.
\end{split}
\end{equation}
By choosing
\begin{equation}\label{eq cond F delta}
\boxed{
\frac{\lambda_n^3}{\lambda_{n+1}^2} \le F_n \le r^{2n}, \qquad  \delta_n\lambda_n \ge \frac{\sqrt{\lambda_n}}{\lambda_{n+1}},
}
\end{equation}
it follows that
\begin{align*}
\| \nabla \boldu_{n+1} \| & \le \|\nabla \boldu_n\| + \|\nabla \boldv_{n+1}^1 \| + \|\nabla \boldv_{n+1}^2 \| \lesssim \lambda_{n+1}.
\end{align*}

\subsection{Bounds on $\|\nabla E_{n,\pm}\|$}
From the definition \eqref{eq E_n+1} we find that
\begin{equation*}
\nabla E_{n+1, \pm} = \nabla E_{n, \pm}^{\delta_n} - \frac{1+s_\pm}{2} \amp \nabla \amp - \frac{\nabla s_\pm}{4} \LC \amp \RC^2 + \nabla w_{n+1, \pm}.
\end{equation*}
From \eqref{eq 2nd grad} we know that
\[
|\nabla E_{n, \pm}^{\delta_n}| + |\amp \nabla \amp| + |\amp|^2 |\nabla s_\pm| \lesssim \lambda_n + \ep F_n (\lambda_{n+1} + \lambda_n^2).
\]
Recall \eqref{eq def w}. It follows that
\begin{align*}
|\nabla w_{n+1, \pm}| & \lesssim |\nabla \boldW_{n+1, \pm}| + \LV \nabla \LC f^{\delta_n}(\boldu_n) - f(\boldu_n^{\delta_n})  \RC \RV. 
\end{align*}
Similar to \eqref{eq comm}, we have
\begin{equation*}
\LN \nabla \LC {f^{\delta_n}(\boldu_n)} - f(\boldu_n^{\delta_n}) \RC \RN_{L^\infty} \lesssim {\delta_n} \| \nabla \boldu_n \|_{L^\infty} \lesssim \delta_n \lambda_n.
\end{equation*}

From \eqref{eq est grad err} and \eqref{eq est grad v} we see that
\begin{equation*}
\LV \nabla \err_1 \RV + \LV \nabla \err_2 \RV + \sum_{k,l = \pm} \LV \nabla \err_3^{k,l} \RV \lesssim \lambda_{n+1}.
\end{equation*}
The estimates in Section \ref{subsec W} yield that 
\begin{align*}
|\nabla \boldW_{n+1}| \lesssim \lambda_{n+1} + \LC 1 + \sqrt{F_n} \lambda_n \RC \frac{\sqrt{F_n} \lambda_n^2}{\delta_n \lambda_{n+1}} + \frac{1}{\sqrt{F_n}} + \frac{\lambda_n^2}{\sqrt{F_n} \lambda_{n+1}} + \frac{\lambda_n^3}{F_n \lambda_{n+1}}.
\end{align*}
Under the condition \eqref{eq cond F delta}, it follows that $|\nabla \boldW_{n+1}| \lesssim \lambda_{n+1}$, and hence
\[
|\nabla E_{n+1, \pm}| \lesssim \lambda_{n+1}.
\]

\subsection{A quick summary}\label{subsec summary}

What we have achieved in this section is the following.

For $\epsilon, c_q , r \in (0,1)$ satisfying 
\[
\frac12 + \frac{\epsilon}{1 - r} \le r^2 \le 1 - \frac{\epsilon}{ c_q (1 - r)},
\]
choose
\[
F_n = \frac{\epsilon^2 r^{2n}}{C_0^2},
\]
where $C_0$ is as in \eqref{eq prelim est w}. By choosing the two sequences $\{\lambda_n\}$, $\{\delta_n\}$ with
\[
\lambda_{n+1} \gtrsim \lambda_n^5, \qquad \frac{\lambda_n^3}{\lambda_{n+1}^2} \le F_n \le r^{2n}, \qquad  \frac{\sqrt{\lambda_n}}{\lambda_{n+1}} \le \delta_n\lambda_n \le \frac{r^{2n} \epsilon^3}{C_0^3 (1-r)},
\]
then the sequence of the iterative approximations $\{ (\boldu_n, E_{n,-}, E_{n,+}) \}$ enjoys the following property:
\begin{equation}\label{sum induct est}
\boxed{
\|\boldu_n\| \lesssim \epsilon, \qquad c_q  \frac{\epsilon^2 r^{2n}}{C_0^2} \le E_{n,\pm} \le \frac{\epsilon^2 r^{2n}}{C_0^2}, \qquad \| \nabla \boldu_n \|, \| \nabla E_{n,\pm} \| \lesssim \lambda_n.
}
\end{equation}

\subsection{Convergence to a weak solution}
 From \eqref{sum induct est} we see that there exists a subsequence of $\{ (\boldu_n, E_{n,-}, E_{n,+}) \}$, still denoted by $\{ (\boldu_n, E_{n,-}, E_{n,+}) \}$, such that as $n \to \infty$, 
 \begin{equation*}
 \boldu_n \rightharpoonup \boldu \text{ weak-*}, \qquad \text{and} \qquad E_{n,\pm} \to 0.
 \end{equation*}
 
 Moreover, by construction we know that each $\boldu_n$ is smooth, and from \eqref{est diff u} it follows that $\{\boldu_n\}$ is a Cauchy sequence in $C^0$. Therefore we in fact have
\[
\boldu_n \to \boldu \quad \text{in } C^0.
\]
Thus, $\boldu$ is a continuous weak solution to \eqref{eq cons}.

  Passing to this limit we find from \eqref{eq iter n} that
\[
\p_t \boldu + \p_x f(\boldu) = 0 \qquad \text{in } \mathcal{D}',
\]
that is, $\boldu$ is a weak solution to \eqref{eq cons}.

\section{Dephasing and non-uniqueness}\label{sec nonunique}

Sections \ref{sec subsoln approx}--\ref{sec induct} provide a systematic way to build weak solutions to system \eqref{eq cons} from a ``constant state'' subsolution of the form \eqref{const subsoln}. We would like to take advantage of the temporal phase function $P(t)$ in the approximation iteration \eqref{eq u_n+1} to generate two distinct weak solutions $\boldu^{(1)}$ and $\boldu^{(2)}$, sharing the same data at $t = 0$. 

For this, we first define a smooth function $\psi \in C^\infty(\R)$ such that 
\begin{equation*}
\psi(t) = \left\{\begin{aligned}
 & 0, & \quad & \text{ when } t \le -1, \\
 & \pi, & & \text{ when } t \ge -\frac12,
\end{aligned}\right. 
\end{equation*}
and $\psi$ is increasing. 

Starting with the same subsolution of the form \eqref{const subsoln} for $\alpha$ sufficiently small, say 
\begin{equation}\label{choice alpha}
\alpha = c_q  F_0 = c_q  \frac{\epsilon^2}{C_0^2}
\end{equation}
as in \eqref{sum induct est}, consider two iteration sequences $\left\{ \boldu^{(1)}_n \right\}$ and $\left\{ \boldu^{(2)}_n \right\}$ as in \eqref{eq u_n+1}, where the phase functions $P^{(1)}(t)$, $P^{(2)}(t)$ in the oscillatory parts $\boldv_{n+1}^{(1)}$ and $\boldv_{n+1}^{(2)}$ are given by
\[
  P^{(1)}(t) = 0, \qquad P^{(2)}(t) = \psi(t). 
\]
The discussion in the previous sections implies that 
\begin{equation}\label{conv to wk soln}
\boldu^{(i)}_n \to \boldu^{(i)} \quad \text{in } C^0
\end{equation}
with $\boldu^{(i)}$ being a weak solution of \eqref{eq cons}, for $i = 1,2$. We further choose the mollification scales $\delta_n$ to be sufficiently small such that 
\[
\sum^\infty_{n=0} \delta_n \le 1. 
\]

\begin{proposition}\label{prop ancient agree}
For all $n \ge 0$ it holds that 
\begin{equation}\label{eq ancient iter agree}
 \boldu^{(1)}_n =  \boldu^{(2)}_n, \ \ \ E^{(1)}_{n, \pm} = E^{(2)}_{n, \pm}  \qquad \textup{for} \quad t \le -1 - \sum^{n-1}_{k=0} \delta_k,
\end{equation}
where we consider $t\le -1$ when $n = 0$. In particular, this implies that
\begin{equation}\label{eq ancient agree}
 \boldu^{(1)} =  \boldu^{(2)} \qquad \textup{for} \quad t \le -2. 
\end{equation}
\end{proposition}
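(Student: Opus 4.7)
I would prove the statement by induction on $n$, tracking how time-convolution with $\eta_{\delta_n}$ progressively eats into the ``agreement interval'' at each step. Throughout, the key structural point is that the two sequences $\{\boldu_n^{(i)}\}$, $\{E_{n,\pm}^{(i)}\}$ are produced by \emph{exactly the same deterministic recipe} (regularize, build amplitudes $a_{n+1}^\pm$ via \eqref{eq choice for a}, form $\boldv_{n+1}^{1}$, $\boldv_{n+1}^{2}$ via \eqref{eq v^1}--\eqref{eq v^2}, then update via \eqref{eq u_n+1} and \eqref{eq E_n+1}), with the only input distinguishing them being the phase function $P^{(i)}(t)$. Since $P^{(1)}(t)=P^{(2)}(t)=0$ for $t\le -1$ (by definition of $\psi$), once the recipe is applied to inputs that coincide on some time half-line $(-\infty,T]$ with $T\le -1$, the outputs will still coincide on $(-\infty,T]$, up to a shrinkage by $\delta_n$ due to the temporal mollification.

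\textbf{Base case $n=0$.} Both iterations start from the constant subsolution \eqref{const subsoln} with $\alpha$ as in \eqref{choice alpha}. Thus $\boldu_0^{(1)}\equiv\boldu_0^{(2)}\equiv 0$ and $E_{0,\pm}^{(1)}\equiv E_{0,\pm}^{(2)}\equiv \alpha$ everywhere, so trivially they agree for $t\le -1$ (which matches the empty sum convention).

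\textbf{Inductive step.} Assume \eqref{eq ancient iter agree} holds at level $n$, and set $T_n := -1-\sum_{k=0}^{n-1}\delta_k$. Since $\eta_{\delta_n}$ is supported in a space-time cube of sidelength $\delta_n$, the convolutions $\boldu_n^{\delta_n}(t,x)$ and $E_{n,\pm}^{\delta_n}(t,x)$ evaluated at $t\le T_n-\delta_n = T_{n+1}$ depend only on the values of $\boldu_n, E_{n,\pm}$ at times $s\le T_n$, where the two iterations coincide. Hence
\[
\boldu_n^{\delta_n,(1)} = \boldu_n^{\delta_n,(2)}, \qquad E_{n,\pm}^{\delta_n,(1)} = E_{n,\pm}^{\delta_n,(2)} \quad \text{for } t\le T_{n+1}.
\]
All subsequent quantities in the construction are pointwise (in $(t,x)$) functions of these regularized objects together with the phase function $P^{(i)}(t)$: the eigenvalues/eigenvectors $\ev_n^\pm, \rev_n^\pm$, the partition $\varphi_{n,j}(\ev_n^\pm)$, the amplitudes $a_{n+1}^\pm$ via \eqref{eq choice for a}, and then the explicit formulas \eqref{eq v^1}--\eqref{eq v^2} for $\boldv_{n+1}^{1}$ and $\boldv_{n+1}^{2}$. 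On $t\le T_{n+1}\le -1$ the phase inputs agree ($P^{(1)}=P^{(2)}=0$), so the explicit formulas produce identical $\boldv_{n+1}^{(1)}=\boldv_{n+1}^{(2)}$. Feeding into \eqref{eq u_n+1} gives $\boldu_{n+1}^{(1)}=\boldu_{n+1}^{(2)}$ on $t\le T_{n+1}$. The same pointwise-recipe observation applies to the error quantities defining $\boldW_{n+1}$, to $s_\pm$, and thence to $w_{n+1,\pm}$ via Cramer's rule \eqref{eq def w}, so \eqref{eq E_n+1} yields $E_{n+1,\pm}^{(1)}=E_{n+1,\pm}^{(2)}$ on $t\le T_{n+1}$, completing the induction.

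\textbf{Passing to the limit.} The assumed summability $\sum_{n=0}^\infty \delta_n \le 1$ gives $T_n \ge -2$ for every $n$. Hence $\boldu_n^{(1)}(t,x)=\boldu_n^{(2)}(t,x)$ for all $(t,x)$ with $t\le -2$ and all $n$. Combined with the $C^0$ convergence $\boldu_n^{(i)}\to \boldu^{(i)}$ established in \eqref{conv to wk soln}, this immediately yields \eqref{eq ancient agree}.

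\textbf{Main obstacle.} The only delicate point is the bookkeeping: one must verify that the entire construction is genuinely a pointwise-in-$(t,x)$ functional of the regularized iterates plus the phase $P(t)$ (no nonlocal operation beyond the convolution by $\eta_{\delta_n}$). Reading through \eqref{eq v^1}--\eqref{eq v^2}, \eqref{eq E_n+1}, and \eqref{eq def w}, each step is indeed pointwise, so the only nonlocality is the mollification, which costs exactly $\delta_n$ of the agreement interval at step $n$ -- and these costs telescope to the harmless total $\sum_{n=0}^\infty \delta_n\le 1$.
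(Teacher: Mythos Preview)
Your proof is correct and follows essentially the same induction-on-$n$ approach as the paper: establish agreement at level $n$ on $\{t\le T_n\}$, lose $\delta_n$ from the mollification, and use that the remainder of the construction is local (pointwise plus $(t,x)$-derivatives on full time slices, with $P^{(1)}=P^{(2)}$ there) to propagate agreement to level $n+1$. Your write-up is in fact more careful than the paper's in tracking that $\boldv_{n+1}$, $\boldW_{n+1}$, $s_\pm$, and $w_{n+1,\pm}$ all depend only on the regularized data and the phase; the one cosmetic point is that ``pointwise'' should be read as ``local'' (the formulas involve $\partial_x$ and $\partial_t$), but since agreement holds on the whole half-space $\{t\le T_{n+1}\}\times\T$ this makes no difference.
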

\begin{proof}
We will prove \eqref{eq ancient iter agree} by induction. Obviously \eqref{eq ancient iter agree} holds at $n = 0$.

Assume that \eqref{eq ancient iter agree} holds for some $n \ge 1$. By definition, for $t \le -1 - \sum^{n-1}_{k=0} \delta_k$ we have $P^{(1)}(t) = P^{(2)}(t)$, which immediately yields that 
\[
\eta_{\delta_n} \ast \boldu^{(1)}_n = \eta_{\delta_n} \ast \boldu^{(2)}_n \quad \text{for} \quad t \le -1 - \sum^{n}_{k=0} \delta_k.
\]
By construction and the definition of $\boldv_{n+1}$ it follows that 
\[
\boldu^{(1)}_{n+1} =  \boldu^{(2)}_{n+1}, \ \ \ E^{(1)}_{n, \pm} = E^{(2)}_{n, \pm} \qquad \textup{for} \quad t \le -1 - \sum^{n}_{k=0} \delta_k.
\]
Finally, \eqref{eq ancient agree} follows from the strong convergence \eqref{conv to wk soln} and the summability of $\delta_n$.
\end{proof}

Recall that now we have
\[
(\boldu^{(i)}_0, E^{(i)}_{0,+}, E^{(i)}_{0,-}) = (0, \alpha, \alpha), \qquad i = 1,2.
\]
We can prove that
\begin{proposition}\label{prop difference}
For all $n \ge 1$, when 
\[
\sqrt{t^2 + x^2} \le \frac{1}{2C \lambda_n} \quad \text{and} \quad \delta_n \le \frac{1}{2C \lambda_n}, 
\]
where
\begin{equation}\label{def C}
C = \frac{8\LC 1 + \sup\{ |\Lambda^+(0)|, |\Lambda^-(0)| \} \RC}{\pi},
\end{equation}
it follows that there exists some constant $C$ such that 
\begin{align}
& \LA \boldu^{(1)}_n, \rev_\pm (0) \RA, \ \LA \boldu^{(1), \delta_n}_n, \rev_\pm (0) \RA \ge \sqrt{\alpha} \LC 1 + p_0 \RC - C \sum^n_{j=1}\LC \frac{2}{c_q }\alpha r^{j-1} + \frac{1}{\lambda_j} \RC, \label{est proj first} \\
& \LA \boldu^{(2)}_n, \rev_\pm (0) \RA, \ \LA \boldu^{(2), \delta_n}_n, \rev_\pm (0) \RA \le -\sqrt{\alpha} \LC 1 + p_0 \RC + C \sum^n_{j=1}\LC \frac{2}{c_q }\alpha r^{j-1} + \frac{1}{\lambda_j} \RC, \label{est proj second}
\end{align}
where $r \in (0,1)$ is in Section \ref{subsec summary}, and $p_0 \in [0,1)$ is defined in \eqref{eigenvalue inner product}.
\end{proposition}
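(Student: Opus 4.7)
The plan is induction on $n$, exploiting that $P^{(1)}(0) = 0$ while $P^{(2)}(0) = \pi$ to force opposite-signed leading contributions at $(t,x) = (0,0)$.

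\textbf{Base case ($n=1$).} Since $\boldu_0 = 0$ and $E_{0,\pm} = \alpha$ are constant in space-time, the cutoffs $\varphi_{0,j}(\ev^\pm(0))$, eigenvectors $\rev^\pm(0)$, and amplitudes $a_1^\pm = \sqrt{2\alpha}$ are all constants, so differentiation in \eqref{eq v^1} produces no low-frequency subleading terms:
\[
\boldv_1^{1,\pm}(t,x) = \LB \sum_j \varphi_{0,j}(\ev^\pm(0)) \RB \sqrt{2\alpha}\, \cos\LC\lambda_1(x - \ev^\pm(0) t) + P^{(i)}(t)\RC \rev^\pm(0).
\]
Inside the ball $\sqrt{t^2+x^2} \le 1/(2C\lambda_1)$, the constant $C$ in \eqref{def C} guarantees $\lambda_1|x - \ev^\pm(0) t| \le \pi/16$, so the cosine is within $\pi^2/512$ of $\cos(P^{(i)}(0)) = \pm 1$. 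The partition-of-unity condition $\sum_j \varphi_{0,j}^2 = 1$ combined with the support structure forces $\sum_j \varphi_{0,j}(\ev^\pm(0)) \in [1, \sqrt{2}]$. Projecting onto $\rev_\pm(0)$ and using $\LA \rev^+(0), \rev^-(0)\RA = p_0$ yields the leading contribution $\pm \sqrt{2\alpha}(1+p_0)$, which is stronger than the claimed $\pm \sqrt{\alpha}(1+p_0)$. The remaining pieces---$\boldv_1^2$ of size $O(\alpha)$, the smoothing error $\delta_1\lambda_1$, and the cosine deviation of size $O(1/\lambda_1^2)$---all fit into the error $C(2\alpha/c_q + 1/\lambda_1)$.

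\textbf{Inductive step.} Assume the bound at level $n$ and write $\boldu_{n+1}^{(i)} = \boldu_n^{(i), \delta_n} + \boldv_{n+1}^{(i)}$. Since the new ball of radius $1/(2C\lambda_{n+1})$ sits inside the old ball of radius $1/(2C\lambda_n)$, and $\delta_n \le 1/(2C\lambda_n)$, standard convolution estimates transfer the inductive bound from $\boldu_n^{(i)}$ to $\boldu_n^{(i),\delta_n}$ up to error $\delta_n \|\nabla \boldu_n\| \lesssim \delta_n \lambda_n$, which is absorbed into $C/\lambda_{n+1}$ by \eqref{eq cond F delta}. The leading part of $\boldv_{n+1}^{1,\pm}$ in the small ball is $\sum_j \varphi_{n,j}(\ev_n^\pm)\, a_{n+1}^\pm \cos(\lambda_{n+1}(x - \ev_{n,j}^\pm t) + P^{(i)}(t)) \rev_n^\pm$; the choice of $C$ again keeps each cosine close to $\cos(P^{(i)}(0)) = \pm 1$, so for $i = 1$ the new contribution is nonnegative along $\rev_\pm(0)$ (nonpositive for $i=2$), preserving the sign of the induction bound. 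The subleading pieces of $\boldv_{n+1}^{1,\pm}$ and all of $\boldv_{n+1}^{2,k,l}$ are controlled via \eqref{eq est v^1}--\eqref{eq est v^2}: the former by $O(|a_{n+1}|\lambda_n^2/\lambda_{n+1} + 1/\lambda_{n+1}) \lesssim 1/\lambda_{n+1}$ using \eqref{eq lambda_n} and \eqref{sum induct est}, while the latter satisfies $O(F_n) \le C \cdot 2\alpha r^n/c_q$ since $F_n = r^{2n}\alpha/c_q$. These match precisely the new summand $C(2\alpha r^n/c_q + 1/\lambda_{n+1})$ in the error.

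\textbf{The principal obstacle} is the careful bookkeeping of partition-of-unity factors and eigenvector rotations through the iteration. Because $\{\varphi_{n,j}^2\}$ (not the $\varphi_{n,j}$ themselves) is the partition of unity, one has $\sum_j \varphi_{n,j}(\ev_n^\pm) \in [1,\sqrt{2}]$; the lower bound of $1$ is precisely what secures the factor $(1+p_0)$ rather than a strictly smaller constant. One must also verify that $\ev_n^\pm$ in the small ball stays close to $\ev^\pm(0)$---using $\|\boldu_n\| \lesssim \epsilon$ from \eqref{sum induct est}---so that only a stable adjacent pair of cutoffs is active throughout the induction, and that $\rev_n^\pm$ differs from $\rev^\pm(0)$ by at most $O(\epsilon)$, producing a cross-term of size $\epsilon \sqrt{F_n}$ that is absorbed into the geometric sum. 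The argument for $\boldu_n^{(2)}$ is symmetric, replacing $\cos(P^{(1)}(0)) = 1$ by $\cos(P^{(2)}(0)) = -1$ throughout.
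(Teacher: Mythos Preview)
Your approach is the paper's: induction on $n$, using $P^{(1)}(0)=0$ versus $P^{(2)}(0)=\pi$ to force opposite-signed leading cosine contributions, with the new oscillation nonnegative along $\rev_\pm(0)$ so the bound propagates. One cosmetic difference: the paper arranges (by adjusting each $\lambda_n$) that exactly one cutoff $\varphi_{n,j}$ is active at $\ev_n^\pm$, which collapses the $j$-sum; you keep the sum and invoke $\sum_j \varphi_{n,j}\in[1,\sqrt{2}]$. Both routes are fine. A minor slip: in your base-case display the phase should be $\lambda_1(x-\ev_{0,j}^\pm t)$ with $\ev_{0,j}^\pm=j/\lambda_0$, so the sum over $j$ cannot be factored through a single cosine; but since each $\ev_{0,j}^\pm$ lies within $2/(3\lambda_0)$ of $\ev^\pm(0)$, your cosine estimate still goes through term by term.

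There is, however, a genuine gap in your mollification transfer. You write that $|\boldu_n^{\delta_n}-\boldu_n|\lesssim \delta_n\|\nabla\boldu_n\|\lesssim \delta_n\lambda_n$ is ``absorbed into $C/\lambda_{n+1}$ by \eqref{eq cond F delta}.'' But \eqref{eq cond F delta} gives the \emph{lower} bound $\delta_n\lambda_n\ge \sqrt{\lambda_n}/\lambda_{n+1}$, not an upper bound, and the upper bound in \eqref{est sum} is only $\delta_n\lambda_n\lesssim r^{2n}$, which is far from $O(1/\lambda_{n+1})$. The paper avoids this transfer entirely: it observes that the cosine lower bound $\ge \sqrt{2}/2$ actually holds on the \emph{larger} ball of radius $1/(C\lambda_{n+1})$ (there the argument is $\le \pi/4$), so convolving against a kernel of size $\delta_{n+1}\le 1/(2C\lambda_{n+1})$ preserves the pointwise lower bound on the stated ball of radius $1/(2C\lambda_{n+1})$. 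That is precisely why the proposition is stated for both $\boldu_n$ and $\boldu_n^{\delta_n}$ simultaneously: the inductive hypothesis for $\boldu_n^{\delta_n}$ is invoked directly in the step, and the mollified bound at level $n+1$ is closed by the averaging argument, not by a gradient estimate.
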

\begin{proof}
We will use an induction argument to prove \eqref{est proj first} and \eqref{est proj second}. Consider first the case when $n = 1$. For any $\delta_0 > 0$, 
\[
\boldu^{(i), \delta_0}_0 = 0, \quad E^{(i), \delta_0}_{0,\pm} = \alpha, \quad  \Lambda^{(i), \pm}_0 = \Lambda^{\pm}(0), \qquad i = 1,2,
\]
and thus
\[
a^{(i), \pm}_1 = \sqrt{2\alpha} \phi_{\beta, \gamma} (\alpha/F_0) = \sqrt{2\alpha} \phi_{\beta, \gamma}(c_q ).
\]
From the condition \eqref{para cond} on the parameters we can choose appropriate $\beta, \gamma$, and the function $\phi_{\beta, \gamma}$ such that $\phi_{\beta, \gamma}(c_q ) = 1$, and so 
\[
a^{(i), \pm}_1 = \sqrt{2\alpha}, \qquad i = 1,2.
\]

From \eqref{eq u_n+1}, \eqref{eq v^1}, and \eqref{eq v^2} we know that
\begin{align*}
\boldu^{(1)}_1 = & \ \sqrt{2\alpha} \sum_j \varphi_{0,j}(\Lambda^+(0)) \cos \LB \lambda_1 (x - \Lambda^+_{0,j} t) + P^{(1)}(t) \RB \rev_+(0) \\
& \ + \sqrt{2\alpha} \sum_j \varphi_{0,j}(\Lambda^-(0)) \cos \LB \lambda_1 (x - \Lambda^-_{0,j} t) + P^{(1)}(t) \RB \rev_-(0) + \mathcal{R}^{(1)}_1, 
\end{align*}
where $|\mathcal{R}^{(1)}_1| \le C \LC \alpha + \frac{1}{\lambda_1} \RC$.

Recall the definition of the localization in Section \ref{subsec loc}. We may choose $\lambda_0$ sufficiently large such that there exists only one $j_+$ such that $\varphi_{0,j_+}(\Lambda^+(0)) \ne 0$. The partition of unity further implies that at such $j$, $\varphi_{0,j_+}(\Lambda^+(0)) = 1$. Similar result holds for $\varphi_{0,j}(\Lambda^-(0))$. Therefore we have 
\begin{align*}
\boldu^{(1)}_1 = & \ \sqrt{2\alpha} \LCB \cos \LB \lambda_1 (x - \Lambda^+_{0,j_+} t) \RB \rev_+(0) + \cos \LB \lambda_1 (x - \Lambda^-_{0,j_-} t) \RB \rev_-(0) \RCB + \mathcal{R}^{(1)}_1,
\end{align*}
with
\[
\LV \Lambda^{\pm}_{0, j_\pm} - \Lambda^\pm(0) \RV \le \frac{2}{3\lambda_0}.
\]
Similarly, for $\boldu^{(2)}_1$ we have
\begin{align*}
\boldu^{(2)}_1 = & \ \sqrt{2\alpha} \LCB \cos \LB \lambda_1 (x - \Lambda^+_{0,j_+} t) + P^{(2)}(t) \RB \rev_+(0) + \cos \LB \lambda_1 (x - \Lambda^-_{0,j_-} t) + P^{(2)}(t) \RB \rev_-(0) \RCB \\
& \ + \mathcal{R}^{(2)}_1 
\end{align*}
with $|\mathcal{R}^{(2)}_1| \le C \LC \alpha + \frac{1}{\lambda_1} \RC$. 

For $|t| < \frac{1}{2}$ we know that $P^{(2)}(t) = \pi$ and the above becomes
\[
\boldu^{(2)}_1 = - \sqrt{2\alpha} \LCB \cos \LB \lambda_1 (x - \Lambda^+_{0,j_+} t) \RB \rev_+(0) + \cos \LB \lambda_1 (x - \Lambda^-_{0,j_-} t) \RB \rev_-(0) \RCB + \mathcal{R}^{(2)}_1.
\]
Note that 
\[
\LV \lambda_1 \LC x - \Lambda^{\pm}_{0, j_\pm} t \RC \RV \le 2 \lambda_1 \LC 1 + \sup\{ |\Lambda^+(0)|, |\Lambda^-(0)| \} \RC \sqrt{t^2 + x^2}.
\]
Therefore
\[
\sqrt{t^2 + x^2} \le \frac{1}{C \lambda_1} \quad \Rightarrow \quad \LV \lambda_1 \LC x - \Lambda^{\pm}_{0, j_\pm} t \RC \RV \le \frac{\pi}{4} \quad \Rightarrow \quad \cos \LB \lambda_1 (x - \Lambda^\pm_{0,j_\pm} t) \RB \ge \frac{\sqrt{2}}{2},
\]
where $C$ is given in \eqref{def C}. This way, if we choose 
\[
\delta_1 \le \frac{1}{2 C \lambda_1},
\]
then
\[
\sqrt{t^2 + x^2} \le \frac{1}{2 C \lambda_1} \quad \Rightarrow \quad \eta^{\delta_1} \ast \cos \LB \lambda_1 (x - \Lambda^\pm_{0,j_\pm} t) \RB \ge \frac{\sqrt{2}}{2}.
\]
Choose $\alpha$ and $\lambda_1$ sufficiently small, and
hence in this region
\begin{align*}
& \LA \boldu^{(1)}_1, \rev_\pm(0) \RA, \ \LA \boldu^{(1), \delta_1}_1, \rev_\pm(0) \RA \ge  \sqrt{\alpha} \LC 1 + p_0 \RC - C \LC \alpha + \frac{1}{\lambda_1} \RC, \\
& \LA \boldu^{(2)}_1, \rev_\pm(0) \RA, \ \LA \boldu^{(2), \delta_1}_1, \rev_\pm(0) \RA \le -\sqrt{\alpha} \LC 1 + p_0 \RC + C \LC \alpha + \frac{1}{\lambda_1} \RC,
\end{align*}
which proves \eqref{est proj first} and \eqref{est proj second} for the case $n = 1$.

Assume now that \eqref{est proj first} and \eqref{est proj second} hold for some general $n \ge 1$. 
When $|t| < \frac{1}{2}$ we have
\begin{align*}
\boldu^{(1)}_{n+1} = & \ \boldu^{(1), \delta_n}_{n} +  \sum_\pm \sum_j \varphi_{n,j}\LC \ev_n^{(1), \pm} \RC a^{(1),\pm}_{n+1} \cos\LB \lambda_{n+1} (x - \ev_{n,j}^{\pm} t) \RB \rev_{\ev_n^{(1), \pm}} + \mathcal{R}^{(1)}_{n+1}, \\
\boldu^{(2)}_{n+1} = & \ \boldu^{(2), \delta_n}_{n} -  \sum_\pm \sum_j \varphi_{n,j}\LC \ev_n^{(2), \pm} \RC a^{(2),\pm}_{n+1} \cos\LB \lambda_{n+1} (x - \ev_{n,j}^{\pm} t) \RB \rev_{\ev_n^{(2), \pm}} + \mathcal{R}^{(2)}_{n+1},  
\end{align*}
where $|\mathcal{R}^{(i)}_{n+1}| \le C \LC a_{n+1}^2 + \frac{1}{\lambda_{n+1}} \RC$ and $\rev_{\ev_n^{(i), \pm}}$ are the right eigenvectors of $\ev_n^{(i), \pm}$, for $i = 1,2$. 
From \eqref{est sum} and \eqref{choice alpha} we see that
\begin{equation}\label{est eigen}
\ev_n^{(i), \pm} = \ev^\pm(0) + O(\sqrt\alpha), \quad \text{and hence} \quad \rev_{\ev_n^{(i), \pm}} = \rev_\pm(0) + O(\sqrt\alpha).
\end{equation}
Similarly as before, by choosing $\lambda_n$ sufficiently large, it holds that for any $i = 1,2$, $k \in \{+,-\}$, there exists a unique $j^{(i)}_k \in \mathbb{Z}$ such that 
\begin{align*}
\boldu^{(i)}_{n+1} = & \ \boldu^{(i), \delta_n}_{n} + (-1)^{i-1} \sum_{k = \pm} a^{(i),k}_{n+1} \cos\LB \lambda_{n+1} (x - \ev_{n,j^{(i)}_k}^{k} t) \RB \rev_{\ev_n^{(i), k}} + \mathcal{R}^{(i)}_{n+1}. \end{align*}
From the definition of $\varphi_{n,j}$ in Section \ref{subsec loc} we know that
\[
\LV \ev_{n,j^{(i)}_k}^{k} - \ev^{(i), k}_n \RV \le \frac{2}{3\lambda_n}, \quad \text{and hence} \quad \LV \ev_{n,j^{(i)}_k}^{k} - \ev^{k}(0) \RV \le \frac{2}{3\lambda_n} + O(\sqrt\alpha).
\]
Therefore
\[
\sqrt{t^2 + x^2} \le \frac{1}{C \lambda_{n+1}} \quad \Rightarrow \quad \LV \lambda_{n+1} \LC x - \ev_{n,j^{(i)}_k}^{k} t \RC \RV \le \frac{\pi}{4} \quad \Rightarrow \quad \cos \LB \lambda_1 (x - \ev_{n,j^{(i)}_k}^{k} t) \RB \ge \frac{\sqrt{2}}{2},
\]
where $C$ is given in \eqref{def C}. Taking $\delta_{n+1} \le \frac{1}{2 C \lambda_{n+1}}$ we have
\[
\sqrt{t^2 + x^2} \le \frac{1}{2 C \lambda_{n+1}} \quad \Rightarrow \quad \eta^{\delta_{n+1}} \ast \cos \LB \ev_1 (x - \ev_{n,j^{(i)}_k}^{k} t) \RB \ge \frac{\sqrt{2}}{2}.
\]
Using \eqref{est eigen} we have
\begin{align*}
\LA \boldu^{(1)}_{n+1}, \rev_+(0) \RA = & \ \LA \boldu^{(1), \delta_n}_n, \rev_+(0) \RA + \sum_{k = \pm} a^{(1),k}_{n+1} \cos\LB \lambda_{n+1} (x - \ev_{n,j^{(1)}_k}^{k} t) \RB \LA \rev_{\ev_n^{(1), k}}, \rev_+(0) \RA \\
& \ + \LA \mathcal{R}^{(1)}_{n+1}, \rev_+(0) \RA \\ 
= & \ \LA \boldu^{(1), \delta_n}_n, \rev_+(0) \RA + a^{(1),+}_{n+1} \cos\LB \lambda_{n+1} (x - \ev_{n,j^{(1)}_+}^{+} t) \RB \\
& \ + p_0 a^{(1),-}_{n+1} \cos\LB \lambda_{n+1} (x - \ev_{n,j^{(1)}_-}^{-} t) \RB + \LA \mathcal{R}^{(1)}_{n+1}, \rev_+(0) \RA \\
& \ + \underbrace{a^{(1),-}_{n+1} \cos\LB \lambda_{n+1} (x - \ev_{n,j^{(1)}_-}^{-} t) \RB \LA \rev_{\ev_n^{(1), -}} - \rev_-(0) , \rev_+(0) \RA}_{= \ O \LC a_{n+1} \sqrt\alpha \RC}.  
\end{align*}
From \eqref{eq choice for a} we know that $0 \le a_{n+1} \le \sqrt{E_{n, \pm}^{\delta_n}}$. From \eqref{sum induct est} we see that $E_{n, \pm} \le \alpha r^{2n}/c_q $. Hence
\[
0 \le a_{n+1} \le r^n \sqrt{\alpha/c_q },
\]
and therefore
\[
a_{n+1}^2 + a_{n+1}\sqrt\alpha = \alpha r^n \LC \frac{r^n}{c_q } + \frac{1}{\sqrt c_q } \RC < \frac{2}{c_q } \alpha r^n.
\]
So when $\sqrt{t^2 + x^2} \le \frac{1}{2 C \lambda_{n+1}}$, from the induction assumption,
\begin{align*}
\LA \boldu^{(1)}_{n+1}, \rev_+(0) \RA \ge & \ \ \LA \boldu^{(1), \delta_n}_n, \rev_+(0) \RA + \frac{\sqrt2}{2} \LC a^{(1),+}_{n+1} + p_0 a^{(1),-}_{n+1} \RC - C \LC \frac{2}{c_q } \alpha r^n + \frac{1}{\lambda_{n+1}} \RC \\
\ge & \ \sqrt{\alpha} \LC 1 + p_0 \RC - C \sum^{n+1}_{j=1}\LC \frac{2}{c_q }\alpha r^{j-1} + \frac{1}{\lambda_j} \RC.
\end{align*}
The rest of the estimates can be obtained through the same argument. 
\end{proof}

Now we can state our main non-uniqueness result of this section.
\begin{theorem}\label{thm nonunique}
The two continuous weak solutions $\boldu^{(1)}, \boldu^{(2)}$ of system \eqref{eq cons} constructed through the process in Proposition \ref{prop difference} have the property that
\[
\boldu^{(1)} = \boldu^{(2)} \ \text{for } \ t \le -2, \quad \text{but} \quad \boldu^{(1)}(0,0) \ne \boldu^{(2)}(0,0).
\]
\end{theorem}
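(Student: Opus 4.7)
The statement has two parts: agreement in the past, and distinctness at the origin.

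The ancient-time agreement $\boldu^{(1)} = \boldu^{(2)}$ for $t \le -2$ is already contained in Proposition \ref{prop ancient agree}, so the only remaining work is to show $\boldu^{(1)}(0,0) \ne \boldu^{(2)}(0,0)$. The plan is to pass to the limit $n \to \infty$ in the estimates \eqref{est proj first}--\eqref{est proj second} of Proposition \ref{prop difference}, evaluated at the spacetime origin $(t,x) = (0,0)$.

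First I would check that the hypotheses of Proposition \ref{prop difference} are satisfied at $(0,0)$ for every $n \ge 1$: since $\sqrt{t^2+x^2} = 0$, the condition $\sqrt{t^2+x^2} \le \frac{1}{2C\lambda_n}$ is trivially met, and the mollification-scale requirement $\delta_n \le \frac{1}{2C\lambda_n}$ is compatible with the iteration constraints summarized in \eqref{sum induct est} and \eqref{eq cond F delta} (it suffices to further shrink $\delta_n$, which only strengthens the commutator estimate \eqref{eq comm}; all subsequent bounds continue to hold). Thus at $(0,0)$ the estimates of Proposition \ref{prop difference} apply for every $n$.

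Second, I would take the limit. By Section \ref{sec induct}, $\boldu^{(i)}_n \to \boldu^{(i)}$ uniformly, so in particular the pointwise values at $(0,0)$ converge. On the right-hand side of \eqref{est proj first}--\eqref{est proj second}, the term $\sum_{j=1}^n \frac{2}{c_q} \alpha r^{j-1}$ is a geometric series bounded by $\frac{2\alpha}{c_q(1-r)}$, while $\sum_{j=1}^n \frac{1}{\lambda_j}$ converges to some $S < \infty$ since $\{\lambda_n\}$ is super-geometric (by \eqref{eq lambda_n} one has $\lambda_n \ge \lambda_1^{5^{n-1}}$, which is summable in reciprocal, and $S$ can be made arbitrarily small by choosing $\lambda_1$ large). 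Passing to the limit yields
\begin{equation*}
\LA \boldu^{(1)}(0,0), \rev_+(0) \RA \ge \sqrt{\alpha}(1+p_0) - C\LC \tfrac{2\alpha}{c_q (1-r)} + S \RC,
\end{equation*}
\begin{equation*}
\LA \boldu^{(2)}(0,0), \rev_+(0) \RA \le -\sqrt{\alpha}(1+p_0) + C\LC \tfrac{2\alpha}{c_q (1-r)} + S \RC.
\end{equation*}

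Finally I would exploit the scaling $\alpha = c_q \epsilon^2/C_0^2$ from \eqref{choice alpha}. Since $\sqrt{\alpha}$ is linear in $\epsilon$ while $\alpha$ is quadratic, for $\epsilon$ sufficiently small the term $\sqrt{\alpha}(1+p_0) \ge \sqrt{\alpha}$ (using $p_0 \ge 0$ from strict hyperbolicity at $0$) strictly dominates the $\alpha$-correction. Combined with choosing $\lambda_1$ large enough so that $CS \le \tfrac{1}{4}\sqrt{\alpha}$, we conclude
\begin{equation*}
\LA \boldu^{(1)}(0,0) - \boldu^{(2)}(0,0), \rev_+(0) \RA \ge 2\sqrt{\alpha}(1+p_0) - 2C\LC \tfrac{2\alpha}{c_q (1-r)} + S \RC > 0,
\end{equation*}
which gives $\boldu^{(1)}(0,0) \ne \boldu^{(2)}(0,0)$, and together with Proposition \ref{prop ancient agree} finishes the proof.

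The only genuine obstacle is bookkeeping: ensuring the parameter constraints imposed in this last step ($\delta_n \le \frac{1}{2C\lambda_n}$, $\lambda_1$ large, $\epsilon$ small) are simultaneously consistent with all prior constraints on $\lambda_n, \delta_n, F_n, \epsilon, r, c_q , \beta, \gamma$ collected in \eqref{sum induct est}, \eqref{eq cond F delta}, and \eqref{para cond}. This is routine since each new constraint only tightens an upper bound or enlarges $\lambda_1$, and none of them conflicts with the super-geometric growth already imposed on $\{\lambda_n\}$.
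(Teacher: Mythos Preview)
Your proposal is correct and follows essentially the same approach as the paper: invoke Proposition~\ref{prop ancient agree} for the ancient-time agreement, then apply the estimates of Proposition~\ref{prop difference} at $(0,0)$, pass to the limit using the $C^0$ convergence $\boldu^{(i)}_n\to\boldu^{(i)}$, and conclude by choosing $\alpha$ (hence $\epsilon$) small and $\lambda_1$ large so that $\sqrt{\alpha}(1+p_0)$ dominates the correction $C\big(\tfrac{2\alpha}{c_q(1-r)}+S\big)$. The paper's version is terser---it subtracts \eqref{est proj first} and \eqref{est proj second} at the level of $\boldu^{(i)}_n$ before taking the limit and absorbs the parameter choices into the phrase ``by choosing $\alpha$ small enough''---whereas you pass to the limit in each inequality separately and spell out the bookkeeping; but the content is the same.
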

\begin{proof}
The agreement of $\boldu^{(1)}$ and $\boldu^{(2)}$ on $t\le -2$ has be proved in Proposition \ref{prop ancient agree}. From Proposition \ref{prop difference} we see that at $(t,x) = (0,0)$, by choosing $\alpha$ small enough,
\begin{align*}
\LA \boldu^{(1)}_{n} - \boldu^{(2)}_{n}, \rev_+(0) \RA \ge 2 \sqrt{\alpha} \LC 1 + p_0 \RC - 2 C \sum^n_{j=1}\LC \frac{2}{c_q }\alpha r^{j-1} + \frac{1}{\lambda_j} \RC \ge \sqrt{\alpha} \LC 1 + p_0 \RC
\end{align*}
for $n$ sufficiently large. Sending $n \to \infty$ yields
\[
\LA \boldu^{(1)}_{n}(0,0) - \boldu^{(2)}_{n}(0,0), \rev_+(0) \RA \ge \sqrt{\alpha} \LC 1 + p_0 \RC,
\]
which concludes the theorem. 
\end{proof}

Once Theorem \ref{thm nonunique} is proved, our main result Theorem \ref{thetheo} follows.

\section{Proof of Theorem \ref{theo-ex}}\label{sec example}
In this section, we study System \ref{example}  that substantiates our hypothesis regarding the non-uniqueness of continuous solutions within the context of a 1D system of conservation laws, and prove Theorem \ref{theo-ex}.
Letting 
$U=\begin{pmatrix} u\\ v
	\end{pmatrix}$ and $f(U)=\begin{pmatrix} \frac{u v}{2}+v\\ u-\frac{v^2}{2}
	\end{pmatrix}$, one calculates
	 $$Df(U)=\begin{pmatrix} \frac{v}{2}\ &\frac{u}{2}+1\\1 \ &-v
	\end{pmatrix}.$$
Computing the trace and the determinant of this matrix, we find that 
$$
\Lambda^-+\Lambda^+=-v/2, \qquad \Lambda^-\Lambda^+=-v^2/2-(1+u/2).
$$	
Hence, System \eqref{example} is strictly hyperbolic on $\left\{(u,v) : \phi(u,v)=4+2u+(9/4)v^2>0 \right\}$, and on this set 
$$
\Lambda^{\pm}=-\frac{v}{4}\pm\frac{\sqrt{\phi}}{2}.
$$
The associated eigenvectors are 
$$
\rev_\pm=\begin{pmatrix} \pm \frac{3v}{4}\pm\frac{\sqrt{\phi}}{2}\\ 1
	\end{pmatrix}.
$$	
We now compute:
$$
\nabla\Lambda^\pm=\left(\pm\frac{1}{2\sqrt{\phi}},-\frac{1}{4}\pm\frac{9v}{8\sqrt{\phi}}\right),
$$	
and so 
\begin{equation}\label{evlambda}
(\rev_\pm\cdot\nabla)\Lambda^\pm=\pm \frac{3v}{2\sqrt{\phi}}. 
\end{equation}
Note that at 0, these quantities are equal to 0. Therefore, to verify $\mathcal{C}_\ep$ of Definition \ref{cond}, we only need to show that the curvatures $\kappa_\pm$ are not 0.

We are able to  calculate $A=|\det(\rev_-,\rev_+)|=2 $,
and $\delta_\Lambda=\Lambda^{+}-\Lambda^{-}=\sqrt{\phi(0,0)}=2>0$ at $(0,0)$.
From \eqref{eq b}, this would imply that $\boldb_\pm(0)=0$ if $\kappa_\pm=0$ at $(0,0).$ 
But we can calculate that 
	$$D^2f(0)=\begin{pmatrix}\begin{pmatrix} 0 &\frac{1}{2}\\\frac{1}{2} &0\end{pmatrix}\\

	\begin{pmatrix} 0 &0\\0 &-1\end{pmatrix}
	\end{pmatrix},$$
which implies that $$D^2f(0):(\rev_{\pm}(0)\otimes \rev_{\pm}(0))=\frac{1}{2}\begin{pmatrix}\pm 1\\-1
\end{pmatrix} = \boldb_{\pm}\neq0.
$$
Therefore, System \ref{example} verifies $\mathcal{C}_\ep$ for all $0<\ep<1$ at 0. From Theorem \ref{thetheo}, there exists $\rho > 0$ such that for any ball $B\subset B(0,\rho)$, there exists two solutions 
of \eqref{example} in $C^0(\R^+\times\R;B)$ sharing the same initial value. 
Choosing such a ball $B$ which does not intersect the line $\{v=0\}$, we see from \eqref{evlambda} that, in addition, both fields are genuinely nonlinear in $B$. This ends the proof of Theorem \ref{theo-ex}.
\vskip0.3cm
Note that  this system has an entropy $\eta$ as $$\eta(U)=\frac{u^2}{2}+ \LC 1+\frac{u}{2} \RC \frac{v^2}{2}-\frac{v^4}{16}$$
 in term of $U=(u,v).$
We can verify that $$\eta''=\begin{pmatrix} 1 & \frac{v}{2}\\\frac{v}{2} &(1+\frac{u}{2})-\frac{3}{4}v^2
	\end{pmatrix}.$$
	The trace of this matrix is given by $2+\frac{u}{2}-\frac{3}{4}v^2>0$ for any $|(u,v)|\ll 1$,  and its determinant is given by $1+\frac{u}{2}-v^2>0$  for any $|(u,v)|\ll 1$.
	Hence, $\eta$ is a convex function around $(0,0)$. This assertion underscores the suitability of our system \eqref{example} as a good system. 

\appendix

\section{Calculation and estimates on the correctors}\label{sec appendix correct}

In this appendix we collect the explicit computation involved in Section \ref{sec subsoln approx}, together with the remainder estimates.  

First, we have
\begin{align}\label{eq remainder 1}
& \p_t \boldv^{1,\pm}_{n+1} + \p_x \LC \ev_n^\pm \boldv^{1,\pm}_{n+1} \RC \\
= & \  \p_x \LCB \frac{1}{\lambda_{n+1}}\sum_j \p_t \big[ \varphi_{n,j} \LC \ev_n^\pm \RC \amp  \rev_n^\pm \big] \sin \LB \lambda_{n+1} (x - \ev_{n,j}^\pm t) + P(t) \RB \RCB \nonumber \\
& \ + \p_x \LCB \sum_j \varphi_{n,j} \LC \ev_n^\pm \RC \amp  \rev_n^\pm \cos \LB \lambda_{n+1} (x - \ev_{n,j}^\pm t) + P(t) \RB \LC - \ev_{n,j}^\pm + \frac{P'}{\lambda_{n+1}} \RC  \RCB \nonumber\\
& \ + \p_x \LCB \frac{1}{\lambda_{n+1}} \sum_j  \ev_n^\pm \p_x \big[ \varphi_{n,j} \LC \ev_n^\pm \RC \amp \rev_n^\pm \big] \sin \LB \lambda_{n+1} (x - \ev_{n,j}^\pm t) + P(t) \RB \RCB \nonumber \\
& \ + \p_x \LCB \sum_j \varphi_{n,j} \LC \ev_n^\pm \RC \amp  \rev_n^\pm \cos \LB \lambda_{n+1} (x - \ev_{n,j}^\pm t) + P(t) \RB \ev_n^\pm  \RCB \nonumber \\
= & \ \p_x \LCB \frac{1}{\lambda_{n+1}} \sum_j  \LC \p_t + \ev_n^\pm \p_x \RC\big[ \varphi_{n,j} \LC \ev_n^\pm \RC \amp \rev_n^\pm \big] \sin \LB \lambda_{n+1} (x - \ev_{n,j}^\pm t) + P(t) \RB \RCB \nonumber \\
& \ + \p_x \LCB \sum_j \varphi_{n,j} \LC \ev_n^\pm \RC \amp \rev_n^\pm \cos \LB \lambda_{n+1} (x - \ev_{n,j}^\pm t) + P(t) \RB \LC \ev_n^\pm - \ev_{n,j}^\pm + \frac{P'}{\lambda_{n+1}} \RC  \RCB \nonumber \\
=: & \ \p_x \rem^{(1),\pm}_{n+1}. \nonumber
\end{align}
Since $|\varphi_{n,j}^{(m)}| \lesssim \lambda_n^m$ and $P'$ is bounded, from \eqref{eq est mean osc} we see that
\begin{equation}\label{eq est R_1}
\begin{aligned}
& \LV \rem^{(1),\pm}_{n+1} \RV & \lesssim_{M} & \LV a_{n+1} \RV \LC \frac{ \lambda_n |\nabla \boldu_n^{\delta_n}|}{\lambda_{n+1}} + \frac{1}{\lambda_n} \RC + \frac{\LV \nabla a_{n+1} \RV}{\lambda_{n+1}}, \\
& \LV \nabla \rem^{(1),\pm}_{n+1} \RV & \lesssim_{M} & \LV a_{n+1} \RV \LC \frac{\lambda_{n+1}}{\lambda_n} + \lambda_n |\nabla \boldu_n^{\delta_n}| +  \frac{\lambda_n^2 |\nabla \boldu_n^{\delta_n}|^2 + \lambda_n {|\nabla^2 \boldu_n^{\delta_n}|}}{\lambda_{n+1}} \RC + \\
& & & \LV \nabla a_{n+1} \RV \LC \frac{\lambda_n |\nabla \boldu_n^{\delta_n}|}{\lambda_{n+1}} + \frac{1}{\lambda_n} \RC + \frac{{\LV \nabla^2 a_{n+1} \RV}}{\lambda_{n+1}}.
\end{aligned}
\end{equation}

Recall that 
\begin{equation*}
\rem^{(2),\pm}_{n+1} := \LB Df(\boldu_n^{\delta_n}) - \ev_n^\pm \id \RB \boldv_{n+1}^{1, \pm}.
\end{equation*}
Using the fact that $\LB Df(\boldu_n^{\delta_n}) - \ev_n^\pm \id \RB \rev_n^\pm = 0$, an improved estimate can be obtained.
\be\label{eq est R_2}
\begin{aligned}
& \LV \rem^{(2),\pm}_{n+1} \RV & \lesssim_{M} & \ \frac{\LV a_{n+1} \RV |\nabla \boldu_n^{\delta_n}|}{\lambda_{n+1}}, \\
& \LV \nabla \rem^{(2),\pm}_{n+1} \RV & \lesssim_{M} & \ \LV a_{n+1} \RV |\nabla \boldu_n^{\delta_n}| \LC 1 + \frac{\lambda_n |\nabla \boldu_n^{\delta_n}|}{\lambda_{n+1}} \RC + \frac{\LV \nabla a_{n+1} \RV |\nabla \boldu_n^{\delta_n}|}{\lambda_{n+1}}.
\end{aligned}
\ee

As for $\rem^{(3),k,l}_{n+1}$, we have 
\begin{align}\label{eq def R_3}
\rem^{(3),k,l}_{n+1} := & \LC D^2f(\boldu_n^{\delta_n}) - D^2f(0) \RC : \LC \boldv_{n+1}^{1, k} \otimes \boldv_{n+1}^{1, l} \RC  \\
&\ + \sum_{i,j} \sin \LB \lambda_{n+1} (x - \ev_{n,i}^k t) + P(t) \RB \sin \LB \lambda_{n+1} (x - \ev_{n,j}^l t) + P(t) \RB \cdot \nonumber \\
& \ \quad \frac{a_{n+1}^k \cdot a_{n+1}^l}{\lambda_{n+1}^2} \LB D^2f(0) : \LC \p_x \LC \varphi_{n,j} \LC \ev_n^k \RC \rev_n^k \RC \otimes \p_x \LC \varphi_{n,j} \LC \ev_n^l \RC \rev_n^l \RC \RC \RB + \nonumber \\
& \ + \sum_{i,j} \varphi_{n,i} \LC \ev_n^k \RC \varphi_{n,j} \LC \ev_n^l \RC \cos \LB \lambda_{n+1} (x - \ev_{n,i}^k t) + P(t) \RB \cos \LB \lambda_{n+1} (x - \ev_{n,j}^l t) + P(t) \RB \cdot \nonumber \\
& \ \quad \LC a_{n+1}^k \cdot a_{n+1}^l \RC \LB D^2f(0) : \LC (\rev_n^k - \rev^k ) \otimes ( \rev_n^l - \rev^l ) \RC \RB. \nonumber 
\end{align}
This way using \eqref{eq est v^1} we obtain the estimate
{\small\begin{align}\label{eq est R_3}
& \LV \rem^{(3),k,l}_{n+1} \RV & \lesssim_{M} & \ a_{n+1}^2 \LB |\boldu_n^{\delta_n}|  \LC 1 + \frac{\lambda_n |\nabla \boldu_n^{\delta_n}|}{\lambda_{n+1}} \RC^2 + \LC \frac{\lambda_n |\nabla \boldu_n^{\delta_n}|}{\lambda_{n+1}} \RC^2 + |\boldu_n^{\delta_n}|^2 \RB + \frac{\LV \nabla a_{n+1} \RV^2 |\boldu_n^{\delta_n}|}{\lambda_{n+1}^2},\nonumber \\
& \LV \nabla \rem^{(3),k,l}_{n+1} \RV & \lesssim_{M} & \ |\nabla \boldu_n^{\delta_n}| \LV \boldv_{n+1}^{1,\pm} \RV^2 + |\boldu_n^{\delta_n}| \LV \boldv_{n+1}^{1,\pm} \RV {\LV \nabla \boldv_{n+1}^{1,\pm} \RV} + a_{n+1}^2 |\boldu_n^{\delta_n}|^2 \LC \lambda_n |\nabla \boldu_n^{\delta_n}| + \lambda_{n+1} \RC + \nonumber \\
& & & \ a_{n+1}^2 |\boldu_n^{\delta_n}| |\nabla \boldu_n^{\delta_n}| + \frac{a_{n+1}^2 \lambda_n^2 |\nabla \boldu_n^{\delta_n}|}{\lambda_{n+1}} \LC |\nabla \boldu_n^{\delta_n}| + \frac{\lambda_n |\nabla \boldu_n^{\delta_n}|^2 + {|\nabla^2 \boldu_n^{\delta_n}|}}{\lambda_{n+1}} \RC + \nonumber \\
& & & \ |a_{n+1}| |\nabla a_{n+1}| \LC |\boldu_n^{\delta_n}|^2 + \frac{\lambda_n^2 |\nabla \boldu_n^{\delta_n}|^2}{\lambda_{n+1}^2} \RC.
\end{align}}

The remainder $\rem^{(4),k,k}_{n+1}$ is defined as 
{\small
\begin{align}\label{eq def R_4 1}
\rem^{(4),k,k}_{n+1} := & \ \sum_j \frac{\boldB_k}{8\lambda_{n+1}} (\p_t + \ev_n^k \p_x) \LCB \LC a_{n+1}^k \RC^2 \LB \varphi_{n,j} \LC \ev_n^k \RC \RB^2 \sin \LB 2\lambda_{n+1} (x - \ev_{n,j}^k t) + 2P(t) \RB \RCB + \nonumber \\
& \ \sum_{|i-j| = 1} \frac{\boldB_k}{8\lambda_{n+1}} (\p_t + \ev_n^k \p_x) \LCB \LC a_{n+1}^k \RC^2  \varphi_{n,i} \LC \ev_n^k \RC \varphi_{n,j} \LC \ev_n^k \RC \sin \LB \lambda_{n+1} (2x - (\ev_{n,i}^k + \ev_{n,j}^k) t) + 2P(t) \RB \RCB \\&+ \nonumber  \ \sum_{|i-j| = 1} \frac{\boldb_k \sin \LC \lambda_{n+1}  (\ev_{n,j}^k - \ev_{n,i}^k) t \RC}{4 \lambda_{n+1} (\ev_{n,j}^k - \ev_{n,i}^k)} (\p_t + \ev_n^k \p_x) \LB \LC a_{n+1}^k \RC^2  \varphi_{n,i} \LC \ev_n^k \RC \varphi_{n,j} \LC \ev_n^k \RC \RB,
\end{align}}
and 
{\small
\begin{align}\label{eq def R_4 2}
\rem^{(4),+,-}_{n+1} := & \ \sum_{i \ne j} \frac{\boldD}{8\lambda_{n+1}} \LC \p_t + \frac{\ev_n^+ + \ev_n^-}{2} \p_x\RC \Big\{ \LC a_{n+1}^+ \cdot a_{n+1}^- \RC  \varphi_{n,i} \LC \ev_n^+ \RC \varphi_{n,j} \LC \ev_n^- \RC  \nonumber \\
& \ \qquad \left. \sin \LB \lambda_{n+1} (2x - (\ev_{n,i}^+ + \ev_{n,j}^-) t) + 2P(t) \RB \RCB +  \\
& \ \sum_{i \ne j} \frac{\boldd \sin \LC \lambda_{n+1}  (\ev_{n,j}^+ - \ev_{n,i}^-) t \RC}{4 \lambda_{n+1} (\ev_{n,j}^+ - \ev_{n,i}^-)} \LC \p_t + \frac{\ev_n^+ + \ev_n^-}{2} \p_x\RC \LB \LC a_{n+1}^+ \cdot a_{n+1}^- \RC  \varphi_{n,i} \LC \ev_n^+ \RC \varphi_{n,j} \LC \ev_n^- \RC \RB. \nonumber
\end{align}}
The estimates of $\rem^{(4),k,l}_{n+1}$ are as below.
\begin{align}\label{eq est R_4}
& \LV \rem^{(4),k,l}_{n+1} \RV & \lesssim_{M} & \ a_{n+1}^2 \LC \frac{ \lambda_n^2 |\nabla \boldu_n^{\delta_n}|}{\lambda_{n+1}} + \frac{1}{\lambda_{n}} \RC + \frac{\lambda_n |a_{n+1}| |\nabla a_{n+1}|}{\lambda_{n+1}}, \nonumber \\
& \LV \nabla \rem^{(4),k,l}_{n+1} \RV & \lesssim_{M} & \ \LV a_{n+1} \RV^2 \LC \frac{\lambda_{n+1}}{\lambda_n} + \lambda_n^2 |\nabla \boldu_n^{\delta_n}| +  \frac{\lambda_n^3 |\nabla \boldu_n^{\delta_n}|^2 + \lambda_n^2 |\nabla^2 \boldu_n^{\delta_n}|}{\lambda_{n+1}} \RC + \\
& & & \ |a_{n+1}| \LV \nabla a_{n+1} \RV \LC \frac{\lambda_n^2 |\nabla \boldu_n^{\delta_n}|}{\lambda_{n+1}} + \lambda_n \RC + \frac{\lambda_n}{\lambda_{n+1}} \LC |\nabla a_{n+1}|^2 + |a_{n+1}| |\nabla^2 a_{n+1}| \RC. \nonumber
\end{align}

The definition of $\rem^{(5),k,l}_{n+1} $, for $k, l \in \{ +, - \}$, together with the corresponding estimates, are given as 
\begin{align}\label{eq def R_5 1}
\rem^{(5),k,k}_{n+1} = & - \sum_{j} \frac{ \sin \LC 2\lambda_{n+1} (x - \ev_{n,j}^k t) + 2P(t) \RC }{8 \lambda_{n+1}} \p_x \LB \LC a_{n+1}^k \varphi_{n,j} \LC \ev_n^k \RC \RC^2 \RB  \tilde{\boldB}_k \\
&  - \sum_{|i-j| = 1} \frac{ \sin \LC \lambda_{n+1} (2x - (\ev_{n,i}^k + \ev_{n,j}^k) t) + 2P(t) \RC }{8 \lambda_{n+1}} \p_x \LB \LC a_{n+1}^k \RC^2 \varphi_{n,i} \LC \ev_n^k \RC \varphi_{n,j} \LC \ev_n^k \RC \RB  \tilde{\boldB}_k \nonumber \\
&  - \sum_{|i-j| = 1} \frac{ \sin \LC \lambda_{n+1}  (\ev_{n,j}^k - \ev_{n,i}^k) t \RC }{4 \lambda_{n+1} (\ev_{n,j}^k - \ev_{n,i}^k)} \p_x \LB \LC a_{n+1}^k \RC^2 \varphi_{n,i} \LC \ev_n^k \RC \varphi_{n,j} \LC \ev_n^k \RC \RB \LC Df(0) - \ev^k(0) \id \RC \tilde{\boldB}_k, \nonumber
\end{align}
and
{\small\begin{align}\label{eq def R_5 2}
\rem^{(5), +,-}_{n+1} = &  - \sum_{j} \frac{ \sin \LB \lambda_{n+1} (2x - (\ev_{n,i}^+ + \ev_{n,j}^-) t) + 2P(t) \RB }{8 \lambda_{n+1}} \p_x \LB \LC a_{n+1}^+ \cdot a_{n+1}^- \RC \varphi_{n,i} \LC \ev_n^+ \RC \varphi_{n,j} \LC \ev_n^- \RC \RB  \boldd \nonumber \\
& - \sum_{j} \frac{ \sin \LB \lambda_{n+1} (\ev_{n,j}^+ - \ev_{n,i}^-) t \RB }{4 \lambda_{n+1} (\ev_{n,j}^+ - \ev_{n,i}^-) } \p_x \LB \LC a_{n+1}^+ \cdot a_{n+1}^- \RC \varphi_{n,i} \LC \ev_n^+ \RC \varphi_{n,j} \LC \ev_n^- \RC \RB \nonumber \\ 
& \qquad \ \  \LC Df(0) - \frac{\ev^+(0) + \ev^-(0)}{2} \id \RC \boldd
\end{align}}
\begin{equation}\label{eq est R_5}
\begin{aligned}
& \LV \rem^{(5),k,l}_{n+1} \RV & \lesssim_{M} & \ \frac{a_{n+1}^2 \lambda_n^2 |\nabla \boldu_n^{\delta_n}|}{\lambda_{n+1}} + \frac{|a_{n+1}| |\nabla a_{n+1}|}{\lambda_{n+1}}, \\
& \LV \nabla \rem^{(5),k,l}_{n+1} \RV & \lesssim_{M} & \ a_{n+1}^2 \LC \lambda_n^2 |\nabla \boldu_n^{\delta_n}| + \frac{\lambda_n^3 |\nabla \boldu_n^{\delta_n}|^2 + \lambda_n^2 |\nabla^2 \boldu_n^{\delta_n}|}{\lambda_{n+1}} \RC + \\
& & & \frac{\lambda_n^2 |a_{n+1}| |\nabla a_{n+1}| |\nabla \boldu_n^{\delta_n}|}{\lambda_{n+1}} + \frac{ |\nabla a_{n+1}|^2 + |a_{n+1}| |\nabla^2 a_{n+1}| }{\lambda_{n+1}}.
\end{aligned}
\end{equation}

\addtocontents{toc}{\protect\setcounter{tocdepth}{0}}
\section*{Acknowledgement} 
Robin Ming Chen is partially supported by the NSF grant: DMS 2205910. Alexis Vasseur is partially supported by the NSF grant: DMS 2306852.   Cheng Yu is is partially supported by the Collaboration Grants for Mathematicians from Simons Foundation.

\addtocontents{toc}{\setcounter{tocdepth}{1}} 
\bibliographystyle{siam}

\end{document}